\title{Combinatorics of line arrangements and dynamics of polynomial vector fields}
\author{Beno\^it Guerville-Ball\'e}
\address{
	Insitut Joseph Fourier 
	UMR 5582 CNRS-UJF  
	100 rue des Math\'ematiques - BP 74  
	38 402 Saint-Martin-d'Heres Cedex 
}
\email{benoit.guerville-balle@ujf-grenoble.fr}
\author{Juan Viu-Sos}
\address{
	Laboratoire de Math\'ematiques et de leurs Applications
	UMR CNRS 5142
	B\^atiment IPRA - Universit\'e de Pau et des Pays de l'Adour
	Avenue de l'Universit\'e - BP 1155
	64013 PAU CEDEX
}
\email{juan.viusos@univ-pau.fr}
\thanks{Both author's are partially supported by the ANR Project Interlow ANR-09-JCJC-0097-01 and the JSPS-MAE PHC-Sakura 2014 Project}				
\subjclass[2010]{52C30, 34C07, 34C08}		
\date{}
\begin{document}

\begin{abstract}
	Let $\A$ be a real line arrangement and $\D(\A)$ the module of $\A$--derivations. First, we give a dynamical interpretation of $\D(\A)$ as the set of polynomial vector fields which posses $\A$ as invariant set. We characterize polynomial vector fields having an infinite number of invariant lines. Then we prove that the minimal degree of polynomial vector fields fixing only a finite set of lines in $\D(\A)$ is not determined by the combinatorics of $\A$.
\end{abstract}

\maketitle

\tableofcontents
\bigskip
\bigskip

\section{Introduction}\label{sec:intro}

A \emph{real line arrangement} $\A$ is a finite set $\{L_1,\ldots,L_n\}$ of lines in $\RR^2$. Its combinatorial data is encoded in the \emph{intersection poset} $L(\A)=\{\emptyset\neq L_i\cap L_j \mid L_i, L_j\in\A\}\cup\A$ partially ordered by reverse inclusion of subsets. The \emph{module of $\A$--derivations}, denoted by $\D(\A)$, is a classical algebraic geometric object associated with an arrangement introduced by Saito in~\cite{Saito80} in a more general context. It is usually studied from an algebraic point of view, but it also has a dynamical interpretation: $\D(\A)$ can be identified with the set of polynomial vector fields in $\RR^2$ possessing $\A$ as invariant set (i.e. the \emph{logarithmic vector fields of the arrangement}). We use this point of view in the following.

The influence of combinatorics of line arrangements over the properties of its realizations into different ambient spaces (as $\RR^2$, $\CC^2$, $\FF_p^2$ and their projectives) was largely studied, e.g. \cite{Arnold69}, \cite{OrlikSolomon80}, \cite{Rybnikov11}. Our work consists in the study of the relation between $\A$, the poset $L(\A)$, and the module $\D(\A)$, in order to understand the influence of the combinatorial structure on the minimal degree of vector fields in $\D(\A)$ and their corresponding dynamics in the real plane.

As a first step, a characterization of the polynomial vector fields admitting only a finite number of invariant lines is required. Then we investigate the minimal degree $d_f(\A)$ of logarithmic vector fields of this kind. We obtain lower bounds of this minimal degree depending only of the combinatorics of $\A$. Finally, we prove that even if $d_f(\A)$ admits a combinatorial lower bound, it is not determined by the intersection poset $L(\A)$.

This approach contrasts with the classical ones given in dynamical systems in the study of invariant lines in systems of low fixed degree by Llibre et al. and Xiang (\cite{Llibre06},\cite{Xiang98}). We are also far away from a more algebraic point of view, as in the works of Abe, Vall\`es and Faenzi (\cite{DanieleJean12},\cite{DanieleJean14},\cite{AbeDanieleJean14}), in terms of logarithmic bundles on the complex projective plane.\\
Following this rapprochement, we would be able to give an interpretation in the real plane of the Terao's conjecture, which asks about the combinatoriality of $\D(\A)$ for free central arrangements.

This paper is organized as follows: in Section~\ref{sec:preliminaries}, we recall the construction of the module of $\A$--derivations, we give its dynamical interpretation and we obtain a structure theorem on the set of logarithmic vector fields with bounded degree. In Section~\ref{sec:finiteness}, we give a characterization of vectors fields in $\D(\A)$ fixing only an infinite number of lines in $\RR^2$. We also study how the maximal multiplicity of singularities and maximal number of parallel lines in $\A$ give a lower bound for $d_f(\A)$. We prove in Section~\ref{sec:combinatorially} that $d_f(\A)$ does not depend on the number of lines and singularities counted by multiplicity (i.e. \emph{weak combinatorics}) or on the intersection poset (i.e. \emph{strong combinatorics}) of the line arrangement~$\A$, using two explicit counter-examples (Ziegler and Pappus arrangements). Finally, we give in Section~\ref{sec:conclu} some perspectives to work in the direction of the Terao's conjecture.

\section{The module of $\A$--derivations}\label{sec:preliminaries}

\subsection{The module of $\A$--derivations and planar vector fields}

Let $S=\Sym(\RR^2)^*$ be the symmetric algebra of the dual space of $\RR^2$. Taking $\{x,y\}$ the dual basis of the canonical one in $\RR^2$, we may identify $S$ with $\RR[x,y]$. For a line $L\in\A$, consider $\alpha_L:\RR^2\to\RR$ an associated affine form such that $L=\ker\alpha_L$. A defining polynomial of $\A$ is given by $\Q_\A=\prod_{L\in\A} \alpha_L$. Let $\Der_{\RR}(\RR[x,y])$ be the algebra of $\RR$-derivations of $\RR [x,y]$,  the \emph{module of $\A$--derivations} (also called \emph{module of logarithmic derivations of $\A$}) is the $\RR[x,y]$-module defined by:
\[
	\D(\A)=\{\chi\in\Der_{\RR}(\RR[x,y]) \mid \chi\Q_\A\in \I_{\Q_\A}\}
\]
where $\I_{\Q_\A}$ is the ideal generated by $\Q_\A$. From the previous definition of $\D(\A)$, it is easy to deduce that $\D(\A)=\bigcap_{L\in\A}\{\chi\in\Der_{\RR}(\RR[x,y]) \mid \chi\alpha_L\in\I_{\alpha_L}\}$, where $\I_{\alpha_L}$ is the ideal generated by $\alpha_L$.

Consider a real planar polynomial differential system defined for $(x,y)\in\RR^2$ by
\begin{equation*}\label{eq:dsystem}
    \frac{dx}{dt}=P(x,y)\qquad\frac{dy}{dt}=Q(x,y)
\end{equation*}
where $P,Q\in\RR[x,y]$. This globally defined autonomous system is associated to a polynomial vector field in the plane given by
\begin{equation}\label{eqn:vector_field}
    \chi=P\partial_x + Q\partial_y
\end{equation}
Following the language of dynamical systems, a polynomial vector field $\chi$ is considered of \emph{degree $d$} if $\max\{\deg P,\deg Q\}=d$.

Since $\Der_{\RR}(\RR[x,y])$ is in correspondence with polynomials vector fields on the plane, we obtain a dynamical interpretation of $\D (\A )$:
\begin{lem}
	Let $\A$ be an arrangement and $\chi\in\Der_{\RR}(\RR[x,y])$. Then $\chi\in\D(\A)$ if and only if $\A$ is invariant by $\chi$.
\end{lem}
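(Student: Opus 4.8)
The plan is to recall the standard definition of invariance: a line $L=\ker\alpha_L$ is invariant by the vector field $\chi=P\partial_x+Q\partial_y$ precisely when the derivative $\chi\alpha_L$ vanishes on $L$, i.e.\ when the polynomial $\chi\alpha_L$ is divisible by $\alpha_L$. (Geometrically, along $L$ the flow direction $(P,Q)$ must be tangent to $L$, which for an affine line with linear part $\alpha_L$ translates exactly into $\alpha_L \mid \chi\alpha_L$.) I would state this as the working definition of ``$\A$ is invariant by $\chi$'': every line of $\A$ is invariant, i.e.\ $\alpha_L \mid \chi\alpha_L$ for all $L\in\A$.

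Granting this, the proof is essentially a translation. First I would use the reformulation already recorded in the excerpt, namely
\[
	\D(\A)=\bigcap_{L\in\A}\{\chi\in\Der_{\RR}(\RR[x,y]) \mid \chi\alpha_L\in\I_{\alpha_L}\},
\]
so that $\chi\in\D(\A)$ if and only if $\chi\alpha_L\in\I_{\alpha_L}$, i.e.\ $\alpha_L\mid\chi\alpha_L$, for every $L\in\A$. Comparing with the invariance condition line by line gives the equivalence immediately: $\chi\in\D(\A)$ iff each $L\in\A$ is invariant by $\chi$ iff $\A$ is invariant by $\chi$. For completeness I would also indicate why the condition $\chi\Q_\A\in\I_{\Q_\A}$ in the primary definition is equivalent to the per-line conditions: by the Leibniz rule $\chi\Q_\A=\sum_{L\in\A}\bigl(\prod_{L'\neq L}\alpha_{L'}\bigr)\chi\alpha_L$, and since the $\alpha_L$ are pairwise non-proportional (distinct lines), each $\alpha_L$ divides $\chi\Q_\A$ iff it divides the single term $\bigl(\prod_{L'\neq L}\alpha_{L'}\bigr)\chi\alpha_L$, hence iff $\alpha_L\mid\chi\alpha_L$; this is the verification of the ``easy to deduce'' claim that underlies the intersection formula.

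The only genuine content — and the step I expect to require the most care — is the geometric lemma that $L$ is $\chi$-invariant as a subset of $\RR^2$ if and only if $\alpha_L\mid\chi\alpha_L$ as polynomials. One direction is the classical Darboux criterion (if $\alpha_L$ is a Darboux polynomial then its zero locus is a union of trajectories); the other requires observing that if $L$ is invariant then $\chi\alpha_L$ vanishes identically on the infinite set $L$, and a polynomial in two variables vanishing on a line $\ker\alpha_L$ with $\alpha_L$ of degree one must be divisible by $\alpha_L$. I would note that invariance here is understood at the level of the foliation/flow (the line, minus its finitely many singular points of $\chi$, is a union of orbits), which is the appropriate notion in the real-analytic setting; the polynomial divisibility statement is insensitive to this subtlety precisely because $L$ is Zariski-dense in itself. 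Once this lemma is in hand, the theorem follows by intersecting over $L\in\A$ as above.
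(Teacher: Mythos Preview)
Your argument is correct. The paper, however, offers no formal proof of this lemma: it is stated as an immediate consequence of the identification of $\Der_{\RR}(\RR[x,y])$ with polynomial vector fields, the remark that follows defines invariance via the flow $\phi_t$, and the subsequent paragraph observes that for algebraic curves this coincides with the Darboux condition $\chi f\in\I_f$. Your write-up simply makes explicit the two steps the paper leaves to the reader --- the Leibniz-rule reduction $\chi\Q_\A\in\I_{\Q_\A}\Leftrightarrow \alpha_L\mid\chi\alpha_L$ for all $L$, and the equivalence between flow-invariance of a line and divisibility of $\chi\alpha_L$ by $\alpha_L$ --- so there is no genuine methodological difference, only a difference in level of detail.
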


\begin{rmk}
	A set $X\subset \RR^2$ is invariant by $\chi$ if $\phi_t(X)\subset X$, for every $t\in\RR$, where $\phi_t:\RR^2\to\RR^2$ is the flow associated to $\chi$ at instant $t$. 
\end{rmk}

In the case of real line arrangements, the required condition for a derivation to belong to $\D(\A)$ is equivalent to the definition of algebraic invariant sets in complex dynamical systems: a complex algebraic curve $\C=\{f=0\}$ is invariant by a polynomial vector field $\chi$ if there exists $K\in\CC[x,y]$ such that $\chi f=Kf$ (see~\cite{Dumortier06}).

%

The notion of degree of polynomial vector fields gives a natural filtration of the module of derivations.
\begin{equation*}
\Der_{\RR}(\RR[x,y])=\bigcup_{d\in\ZZ} \Der_{\RR}(\RR_d[x,y]),
\end{equation*}
where $\Der_{\RR}(\RR_d[x,y])=\{P\partial_x+Q\partial_y \mid \deg P,\deg Q\leq d\}$. Note that $\Der_{\RR}(\RR_d[x,y])=\emptyset$, for $d<0$. Restricting to the module of derivations, we obtain an ascending filtration of $\D(\A)$ by the vectorial spaces $\F_d\D(\A)=\D(\A)\cap\Der_{\RR}(\RR_d[x,y])$. We denote by $\D_d(\A)=\F_d\D(\A)\setminus \F_{d-1}\D(\A)$ the set of \emph{polynomial vector fields of degree $d$ fixing $\A$}.

\subsection{Geometry of logarithmic vector fields}\label{subsec:log_vect_field}

We begin with a necessary and sufficient condition on a line to be invariant by a polynomial vector field.

\begin{propo}\label{Proposition_CNS}
	Let $L$ be a line of $\RR^2$ defined by the equation $f=\alpha x +\beta y + \gamma = 0$, and let $\chi=P(x,y)\partial_x + Q(x,y) \partial_y$ be a polynomial vector field on $\RR^2$. The line $L$ is invariant for $\chi$ if and only if we are in one of the following cases:
	\begin{enumerate}
		 \item $\beta=0$ and $P(-\gamma/\alpha,y)=0$,
		 \item $\beta\neq 0$ and $\alpha P(\beta y, -\alpha y+\gamma/\beta) + \beta Q(\beta y, -\alpha y+\gamma/\beta) = 0$.
		\end{enumerate}
\end{propo}

\begin{proof}
	It is easy to check that the vertical line $L_{x}=\{x=0\}$ is invariant by $\chi$ if and only if $P(0,y)=0$. In order to obtain the result, we make an affine transformation of the plane $\varphi:\RR^2\rightarrow\RR^2$ such that $L$ is sent on the line $L_x$. The vector field $\chi$ can be seen as a section of the tangent bundle $T(\RR^2)$ and we denote by $\chi_\varphi$ the pushforward of $\chi$ by $\varphi$.
	\begin{equation*}
		\begin{tikzcd}
			T(\RR^2) 	\arrow{r}{\varphi_*}   \arrow{d}									 												& T(\RR^2)	\arrow{d}  \\[1.5em] 
			\RR^2     		\arrow{r}{\varphi}			\arrow[bend left=20]{u}{\chi} 		 					& \RR^2 		\arrow[bend right=20]{u}[swap]{\chi_\varphi}
		\end{tikzcd}
	\end{equation*}
	Hence, $L=\{f=0\}$ is invariant by $\chi$ if and only if $L_x=\{x=0\}$ is invariant by $\chi_\varphi$. 
	
	Assume $\beta=0$, thus $L$ is vertical and only a translation $\varphi(x,y)=(x-\gamma/\alpha,y)$ is needed:
	\begin{equation*}
		\chi_\varphi=P(x-\gamma/\alpha,y)\partial_x + Q(x-\gamma/\alpha,y)\partial_y
		\end{equation*}
	If $\beta\neq 0$, we consider $\varphi(x,y)=(\alpha x+\beta y,\beta x-\alpha y+\gamma/\beta)$ and we obtain: 
	\begin{multline*}
		\chi_\varphi=\left[ \alpha P(\alpha x+\beta y,\beta x-\alpha y+\gamma/\beta) + \beta Q(\alpha x+\beta y,\beta x-\alpha y+\gamma/\beta) \right]\partial_x + \\
		\left[ \beta P(\alpha x+\beta y,\beta x-\alpha y+\gamma/\beta) - \alpha Q(\alpha x+\beta y,\beta x-\alpha y+\gamma/\beta) \right]\partial_y.
	\end{multline*}
	Clearly, $L_x$ is invariant by $\chi_\varphi$ if and only if the coordinate of $\partial_x$ in $\chi_\varphi$ is zero. This implies the result.
\end{proof}

\begin{rmk}\label{Rmk_EquationCNS}
	Considering the vector field $\chi=P(x,y)\partial_x + Q(x,y)\partial_y$ of degree $d$ defined by generic polynomials 
    \begin{equation}\label{eq:generic_pols}
		P(x,y)=\sum\limits_{i+j\leq d} a_{i,j}x^iy^j \quad \text{ and } \quad Q(x,y)=\sum\limits_{i+j\leq d} b_{i,j}x^iy^j,
	\end{equation}
	with real coefficients, we can express the LHS of the equation of Proposition~\ref{Proposition_CNS} case (2), as a univariate polynomial in $\RR[y]$ in terms of $P$ and $Q$:
    \begin{equation*}
        R(y)=P(\beta y, -\alpha y+\gamma/\beta) + \beta Q(\beta y, -\alpha y+\gamma/\beta)
    \end{equation*}
	Thus, in the case $\beta\neq0$ the reader can easily verify that the equation $R(y)=0$ is equivalent to the system composed by:
    \begin{equation*}\label{eq:coeffs_Y}\tag{$E_m$}
        \quad0=\text{Coeff}_{y^m}R(y)=\sum_{k=0}^{d-m}\sum_{l=0}^m (\alpha a_{m-l,k+l} + \beta b_{m-l,k+l})\cdot \binom{k+l}{k}\cdot(-\alpha)^l\beta^{m-k-l}\gamma^k
    \end{equation*}
    for every $0\leq m\leq \deg{R}$.
\end{rmk}

Consider $\C(d)$ the $\RR$-linear space of coefficients of a pair of polynomials of degree less or equal than $d$, as in equation (\ref{eq:generic_pols}). We have $\C(d)=\RR^{(d+1)(d+2)/2}\oplus\RR^{(d+1)(d+2)/2}\simeq\RR^{(d+1)(d+2)}$. Fixing a line arrangement $\A$, we get by Proposition~\ref{Proposition_CNS} and Remark~\ref{Rmk_EquationCNS} that the equations defining $\F_d\D(\A)$ are linear in the coefficients of $P$ and $Q$, thus we can compute $\F_d\D(\A)$ as kernel of a linear map $\psi: \C(d)\to\RR^{n(d+1)}$, where $|\A|=n$.

\begin{thm}[\bf Structure of polynomial vector fields]\label{thm:struc_vf}
	Let $\A$ be a line arrangement. For each $d\in\NN$, $\F_d\D(\A)$ is a linear sub-space of the space of coefficients $\C(d)$.
\end{thm}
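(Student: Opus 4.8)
The plan is to assemble the linear-algebraic description already prepared in the excerpt and observe that $\F_d\D(\A)$ is literally cut out by linear equations in the coefficient space $\C(d)$. Concretely, a vector field $\chi=P\partial_x+Q\partial_y$ with $\deg P,\deg Q\le d$ is determined by its coefficient vector $(a_{i,j},b_{i,j})\in\C(d)=\RR^{(d+1)(d+2)}$. By Proposition~\ref{Proposition_CNS}, membership in $\F_d\D(\A)$ means that each line $L=\{\alpha x+\beta y+\gamma=0\}$ of $\A$ is invariant, which in turn is governed by one of the two stated conditions according to whether $\beta=0$ or $\beta\neq 0$.

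First I would treat a single line $L\in\A$. If $\beta=0$, invariance is the condition $P(-\gamma/\alpha,y)=0$ identically in $y$; writing $P(-\gamma/\alpha,y)=\sum_{m=0}^d\big(\sum_{i+j: j=m} a_{i,j}(-\gamma/\alpha)^{i}\big)y^m$, this is the vanishing of $d+1$ coefficients, each an $\RR$-linear form in the $a_{i,j}$. If $\beta\neq 0$, invariance is $R(y)=0$ identically, and Remark~\ref{Rmk_EquationCNS} gives explicitly the equations $(E_m)$ for $0\le m\le d$, each of which is visibly an $\RR$-linear combination of the $a_{i,j}$ and $b_{i,j}$ (the binomial and power-of-$\alpha,\beta,\gamma$ factors are fixed scalars once $L$ is fixed). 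In either case the invariance of $L$ is equivalent to the vanishing of $d+1$ linear forms $\lambda^L_0,\dots,\lambda^L_d$ on $\C(d)$; bundling them gives a linear map $\psi_L:\C(d)\to\RR^{d+1}$ with $\ker\psi_L=\{\chi:\deg\le d,\ L\text{ invariant}\}$.

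Next I would take the product over the $n=|\A|$ lines: define $\psi=\bigoplus_{L\in\A}\psi_L:\C(d)\to\RR^{n(d+1)}$, a linear map. Since $\D(\A)=\bigcap_{L\in\A}\{\chi: L\text{ invariant}\}$ (the decomposition of $\D(\A)$ recalled right after its definition), we get $\F_d\D(\A)=\bigcap_{L\in\A}\ker\psi_L=\ker\psi$. A kernel of a linear map between finite-dimensional real vector spaces is a linear subspace, which is exactly the assertion. I would also note that $\F_d\D(\A)$ is nonempty for every $d\ge 1$ (it contains the trivial derivation $0$, and in fact the Euler-type field $Q_\A$-multiples once $d$ is large enough), so it is a genuine vector subspace, not merely an affine one.

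There is no real obstacle here: the only thing requiring any care is bookkeeping, namely checking that in the $\beta\neq 0$ case the formula for $\mathrm{Coeff}_{y^m}R(y)$ in Remark~\ref{Rmk_EquationCNS} is genuinely linear in all the $a_{i,j},b_{i,j}$ simultaneously — which it is, since each monomial $x^iy^j$ of $P$ (resp. $Q$) contributes, after the substitution $x=\beta y$, $y=-\alpha y+\gamma/\beta$ and expansion, a fixed polynomial in $y$ with coefficients independent of the $a$'s and $b$'s — and that the $\beta=0$ case produces the analogous linear forms. Everything else is the standard fact that an intersection of kernels of linear maps is a linear subspace.
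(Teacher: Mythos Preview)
Your argument is correct and is essentially the same as the paper's: the paper observes, in the paragraph immediately preceding the theorem, that Proposition~\ref{Proposition_CNS} and Remark~\ref{Rmk_EquationCNS} show the defining equations of $\F_d\D(\A)$ are linear in the coefficients of $P$ and $Q$, so that $\F_d\D(\A)$ is the kernel of a linear map $\psi:\C(d)\to\RR^{n(d+1)}$. You have simply unpacked this in more detail, treating the $\beta=0$ and $\beta\neq 0$ cases separately and building $\psi$ as the direct sum of the per-line maps $\psi_L$; this matches the paper exactly, down to the target space $\RR^{n(d+1)}$.
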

\section{Finiteness of derivations and combinatorial data}\label{sec:finiteness}

\subsection{Finiteness of fixed families of lines}

In order to efficiently characterize line arrangements as invariant sets of a polynomial vector field, the first step is to obtain conditions on the finiteness of the family of invariant lines under a vector field. This leads us to the notion of maximal line arrangement fixed by a polynomial vector field.

\begin{de}
  Let $\chi$ be a polynomial vector field in the plane. We said that a line arrangement $\A$ is \emph{maximal fixed by} $\chi$ if any line $L\subset\RR^2$ invariant by $\chi$ belongs to $\A$.
\end{de}

\begin{rmk}
    The notion of line arrangement is taken generally considering a finite collection of lines. Thus, there exist polynomial vector fields in the plane for which there are no a maximal line arrangements fixed by them: the null vector field is a trivial example, as well as a ``central'' vector field $\chi_c=x\partial_x + y\partial_y$ or a ``parallel'' vector field $\chi_p=(x+1)\partial_y$.\\
    In Theorem~\ref{thm:infty_vf} we prove that the derivations which have not maximal fixed arrangements are essentially of these types.
	\begin{figure}[ht]
	    \centering
	    \begin{tikzpicture}[scale=0.5]
	\begin{scope}[xshift=-6cm]
	\draw[->] (-4.5,0) -- (4.5,0);
	\draw[->] (0,-4.5) -- (0,4.5);
	\foreach \theta in {0,15,...,360} {
		\draw[color=blue] (\theta:4.25) -- (\theta:-4.25);
	}
	\foreach \rho in {0,1,...,4} {
		\foreach \theta in {0,15,...,360} {
			\draw[->,color=blue] (\theta:\rho) -- (\theta:\rho+0.1);
		}
	}
	\end{scope}
	\begin{scope}[xshift=6cm]
	\draw[->] (-4.5,0) -- (4.5,0);
	\draw[->] (0,-4.5) -- (0,4.5);
	\foreach \x in {-4,-3.25,...,4.25} {
		\draw[color=blue] (\x,4.25) -- (\x,-4.25);
	}
	\foreach \x in {-4,-3.25,...,-1.5} {
		\foreach \y in {-4,-3.5,...,4} {
			\draw[->, color=blue] (\x,\y) -- (\x,\y-0.1);
		}
	}
	\foreach \y in {-4,-3.75,...,4} {
			\node[color=blue] at (-1,\y) {$\cdot$};
	}
	\foreach \x in {-0.25,0.5,...,4.25} {
		\foreach \y in {-4,-3.5,...,4} {
			\draw[->, color=blue] (\x,\y) -- (\x,\y +0.1);
		}
	}
	\end{scope}
\end{tikzpicture}
	    \caption{Phase portraits of polynomial vector fields $\chi_c=x\partial_x + y\partial_y$ and $\chi_p=(x+1)\partial_y$, respectively.}
	\end{figure}
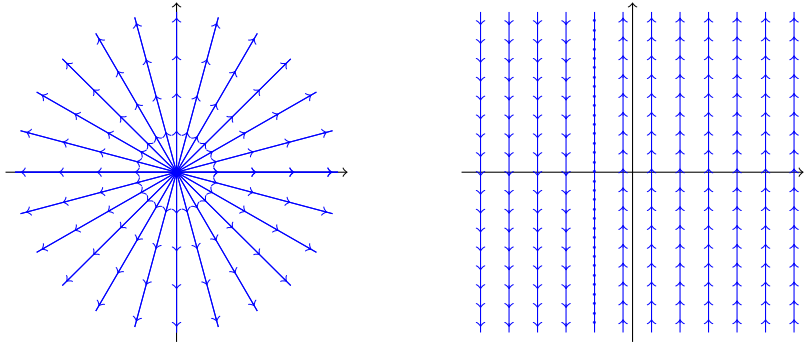
\end{rmk}

\begin{de}
    We said that $\chi$ \emph{fixes only a finite set of lines} if there exists a maximal arrangement fixed by $\chi$. Conversely, we said that $\chi$ \emph{fixes an infinity of lines} if there is no such maximal line arrangement.
\end{de}

We consider the partition $\D(\A)=\D^\infty(\A)\sqcup\D^f(\A)$ following this notion, where $\D^\infty(\A)$ and $\D^f(\A)$ are the sets of elements in $\D(\A)$ fixing only a finite set of lines and fixing an infinite set of lines, respectively. We are interested in the study of this notion in the filtration by degree previously defined, denoting $\D_d^\infty(\A) = \D_d(\A)\cap\D^\infty(\A)$ and $\D_d^f(\A) = \D_d(\A)\cap\D^f(\A)$, for $d\in\NN$.

\begin{rmk}
    The understanding of $\D_d^\infty(\A)$ and $\D_d^f(\A)$ is precisely what is needed to obtain an intrinsic formulation in the article of Llibre at al. \cite{Llibre98}: the polynomial vector field under consideration in any statement belongs to $\D_d^f(\A)$ by hypothesis (see for example Proposition 6 or Theorem 7 in~\cite{Llibre98}).
\end{rmk}

In order to determine the elements of $\D^\infty(\A)$, we introduce a geometrical characterization for vector fields with fix an infinity family of lines.

\begin{de}\label{def:central_parallel}
	A non-null vector field $\chi$ is said to be \emph{central} if there is a point $(x_0,y_0)\in\RR^2$ such that $(x-x_0,y-y_0)$ and $\big(P(x,y),Q(x,y)\big)$ are collinear vectors, for any $(x,y)\in\RR^2$. Otherwise, $\chi$ is said to be \emph{parallel} if there is a $v\in\RR^2$ such that $v$ and $\big(P(x,y),Q(x,y)\big)$ are collinear vectors, for any $(x,y)\in\RR^2$.
\end{de}

Note that there is no vector fields which are simultaneously central and parallel other than the null vector field $\chi=0$.

Let us present a first result relating the combinatorics of an arrangement and the nature of the vector fields in $\D_d(A)$. Let $m(\A)$ be the maximal multiplicity of singular points of $\A$, and let $p(\A)$ be the maximal number of parallel lines.

\begin{thm}\label{thm:bounds_infinite}
	Let $\A$ be a line arrangement and define $\nu_\infty(\A)=\max\{m(\A)-1, p(\A)\}$. If $d<\nu_\infty(\A)$, then $\D_d(\A)$ and $\D_d^\infty(\A)$ are equal.
\end{thm}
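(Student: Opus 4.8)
The plan is to show that if $d < \nu_\infty(\A)$, then every vector field $\chi \in \D_d(\A)$ already fixes infinitely many lines, i.e. $\D_d(\A) \subseteq \D_d^\infty(\A)$; the reverse inclusion is trivial by definition. The two quantities $m(\A)-1$ and $p(\A)$ in $\nu_\infty(\A)$ should be handled separately, and in each case the argument is local and produces a one-parameter (or larger) family of invariant lines, forcing $\chi$ into $\D^\infty(\A)$.

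\textbf{Case of a high-multiplicity point.} Suppose $m = m(\A)$ and let $p_0$ be a singular point through which $m$ lines of $\A$ pass. After an affine change of coordinates (legitimate by the pushforward argument already used in the proof of Proposition~\ref{Proposition_CNS}) we may assume $p_0$ is the origin, so each of these $m$ lines is of the form $a_i x + b_i y = 0$. The key point is that, using the characterization of Proposition~\ref{Proposition_CNS} (case (2), and the vertical case (1)) together with the coefficient equations $(E_m)$ of Remark~\ref{Rmk_EquationCNS}, invariance of a line through the origin with slope parameter imposes that a certain polynomial $R_\lambda(y)$ — now of degree $\le d$ in $y$ — vanishes identically; but the condition that this holds for $m > d$ distinct values of the slope parameter forces the $y=0$ (constant-in-$\lambda$) part and enough of the coefficient structure to collapse, so that in fact $R_\lambda(y)$ vanishes for \emph{every} slope $\lambda$. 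Concretely, I would write $P = P_0 + \tilde P$, $Q = Q_0 + \tilde Q$ where $P_0, Q_0$ are the parts vanishing at the origin and $\tilde P(0,0), \tilde Q(0,0)$ are the constant terms, and observe that the system of invariance conditions, being linear and having $m > d$ solutions among lines of a pencil, is really the vanishing of a polynomial of degree $\le d$ in the slope at $> d$ points — hence identically zero. This means every line through the origin is invariant, so $\chi$ is central about $p_0$ (Definition~\ref{def:central_parallel}), hence fixes infinitely many lines.

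\textbf{Case of many parallel lines.} Suppose $p = p(\A)$ lines of $\A$ are mutually parallel. After an affine change of coordinates we may take these to be the vertical lines $x = c_1, \dots, x = c_p$ with the $c_i$ distinct. By Proposition~\ref{Proposition_CNS}(1), invariance of $x = c_i$ is equivalent to $P(c_i, y) \equiv 0$ in $y$. Viewing $P(x,y) = \sum_{j} P_j(x) y^j$ with $\deg P_j \le d$, invariance of all $p$ lines says each $P_j$ vanishes at the $p > d$ distinct points $c_1, \dots, c_p$, hence $P_j \equiv 0$ for all $j$, i.e. $P \equiv 0$. But then every vertical line $x = c$ is invariant by $\chi$, so $\chi$ is parallel (with direction $\partial_y$), and again fixes infinitely many lines.

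\textbf{Main obstacle.} The routine part is the parallel case, which is essentially a Vandermonde/interpolation argument. The delicate step is the high-multiplicity case: one must verify carefully that the linear invariance conditions coming from $m > d$ lines of a single pencil, expressed through the coefficient equations $(E_m)$, genuinely force $\chi$ to be central rather than merely constraining it — i.e. that passing from ``$R_\lambda \equiv 0$ for $m$ values of $\lambda$'' to ``$R_\lambda \equiv 0$ for all $\lambda$'' is valid. This works because each coefficient $\mathrm{Coeff}_{y^m} R_\lambda(y)$ is, after the normalization putting the pencil's base point at the origin (so $\gamma$ is replaced by the slope parameter), a polynomial of degree $\le d$ in that parameter, and a degree-$\le d$ polynomial with $> d$ roots is zero; bookkeeping the exact degree in the slope is where care is needed, and this is the step I expect to occupy the bulk of the real proof.
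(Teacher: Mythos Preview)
Your proposal is correct and follows essentially the same route as the paper: the theorem is deduced from a proposition (the paper's Proposition~\ref{Propo:SingParaMax}) that treats the high-multiplicity and many-parallels cases separately, normalizes by an affine change so the pencil is based at the origin (resp.\ the parallel family is vertical), and then kills the relevant coefficients by an interpolation/Vandermonde argument in the slope parameter (resp.\ in the abscissa). One small correction on the bookkeeping you flagged: in the central case the coefficient of $y^n$ in the invariance relation is a polynomial of degree $n+1\le d+1$ in the slope (not $\le d$), which is why the hypothesis $d<m(\A)-1$, i.e.\ $m(\A)\ge d+2$ distinct slopes, is exactly what is needed; the paper makes this explicit by isolating the $(d+1)\times(d+2)$ Vandermonde system and reading off $yP=xQ$.
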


This theorem holds directly from the following result:

\begin{propo}\label{Propo:SingParaMax}
	Let $\A$ be an arrangement and let $\chi\in\D_d(\A)$.
	\begin{enumerate}
	    \item If $d<m(\A)-1$ then $\chi$ is a central vector field and $\chi\in \D_d^\infty(\A)$.
	    
	    \item If $d<p(\A)$ then $\chi$ is a parallel vector field and $\chi\in \D_d^\infty(\A)$.
	\end{enumerate}
\end{propo}

\begin{proof}
	We decompose this proof in two cases.

  First, suppose $d+1<m(\A)$. Up to an affine transformation, we may assume that the singular point $P$ of multiplicity $d+2$ of $\A$ is the origin, and that the vertical line $L_x=\{x = 0\}$. Let $L_i=\{y=\alpha_i x\}$ be the $d+2$ lines passing by point $P$. Proposition~\ref{Proposition_CNS} implies that for all $i\in \set{1,\cdots,d+2}$ we have
  \[
    \alpha_i P( y,-\alpha_i y) + Q( y,-\alpha_i y)   = \sum\limits_{n=0}\limits^{d} \Big( \sum\limits_{j=0}\limits^{n} \big(  \alpha_i a_{n-j,j} + b_{n-j,j} \big) (-\alpha_i)^j \Big) y^n  =0,
  \]
  which is equivalent to the system of $(d+2)(d+1)$ equations defined, for all $n\in\set{0,\cdots,d}$ and $i\in\set{1,\cdots,d+1}$, by
  \begin{equation}\tag{$Eq_{(n,j)}$}
      \sum\limits_{j=0}\limits^{n} \big(  \alpha_i a_{n-j,j} + b_{n-j,j} \big) (-\alpha_i)^j = 0
  \end{equation}
  We regroup them in $d+1$ systems $S_n$ formed by the $d+2$ equations (indexed by $i$). These equations are polynomial of degree $n+1$ in $\alpha_i$. We denote by $c_k$ the coefficient of $\alpha^k$, that is $c_0=b_{n,0}$, $c_n=a_{0,n}$ and $c_k=a_{k,n-k}-b_{k-1,n-k+1}$ for $k\in\set{1,n-1}$. If we restrict the system $S_n$ to their $n+2$ first equations, then we remark that the square system in $c_k$ obtained is in fact a Vandermonde system. Since all the $\alpha_i$ are distinct then the system admits a unique solution $c_k=0$. This implies that $a_{0,n}=0$, $b_{n,0}=0$ and $a_{k,d-k}=b_{k-1,d-k+1}$ for $k\in\set{1,d}$. Thus we have $yP(x,y)=xQ(x,y)$, which is a central vector field.

  In a second case, assume that $d<p(\A)$ hence $\A$ has at least $d+1$ parallel lines. Then, without lost of generality, we may assume that these lines are vertical. Let $y_0\in\RR$, from Proposition~\ref{Proposition_CNS} we have that $P(x,y_0)=0$ for $d+1$ different values of $x$. Since $P$ is a polynomial of degree less or equal than $d$, then $P(x,y)=0$ and $\chi$ fixes all the vertical lines.
\end{proof}

Following this study of the appearance of elements in $\D(\A)$ by degree, we can give a first bound for $d_f(\A)$ in terms of combinatorics of the line arrangement.
\begin{cor}
	Let $\A$ be an arrangement in $\RR^2$, then $d_f(\A)\geq \nu_\infty(\A)$.
\end{cor}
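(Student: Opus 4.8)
The plan is to read this off directly from Theorem~\ref{thm:bounds_infinite}. By definition, $d_f(\A)$ is the least integer $d\in\NN$ for which $\D_d^f(\A)\neq\emptyset$, that is, the minimal degree of a logarithmic vector field of $\A$ fixing only a finite set of lines. The first thing I would record is the elementary fact that the partition $\D(\A)=\D^\infty(\A)\sqcup\D^f(\A)$ restricts level by level along the degree filtration: intersecting with $\D_d(\A)$ yields $\D_d(\A)=\D_d^\infty(\A)\sqcup\D_d^f(\A)$ for every $d$, so that $\D_d^f(\A)=\emptyset$ precisely when $\D_d(\A)=\D_d^\infty(\A)$.

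The second and only substantive step is to invoke Theorem~\ref{thm:bounds_infinite}: for every $d<\nu_\infty(\A)$ one has $\D_d(\A)=\D_d^\infty(\A)$, and therefore $\D_d^f(\A)=\emptyset$ by the previous observation. Hence no element of $\D(\A)$ of degree strictly smaller than $\nu_\infty(\A)$ fixes only a finite set of lines, which is exactly the inequality $d_f(\A)\geq\nu_\infty(\A)$.

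There is no genuine obstacle here; the only point worth a remark is that the statement tacitly presupposes that $d_f(\A)$ is finite, i.e. that $\D^f(\A)\neq\emptyset$. I would dispose of this by noting that a derivation in $\D(\A)$ whose homogeneous part of top degree is suitably generic is neither central nor parallel, hence lies in $\D^f(\A)$ by the characterization obtained below (Theorem~\ref{thm:infty_vf}); thus the set over which the minimum defining $d_f(\A)$ is taken is nonempty and the inequality is meaningful. Alternatively, adopting the convention $d_f(\A)=+\infty$ when $\D^f(\A)=\emptyset$, the inequality holds trivially in that degenerate case.
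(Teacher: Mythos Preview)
Your argument is correct and is exactly the intended one: the corollary is read off directly from Theorem~\ref{thm:bounds_infinite} via the levelwise partition $\D_d(\A)=\D_d^\infty(\A)\sqcup\D_d^f(\A)$, and the paper offers no separate proof beyond this immediate deduction. Your additional remark on the finiteness of $d_f(\A)$ is a reasonable aside, though the paper does not address it.
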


\subsection{Characterization of elements in $\D^\infty(\A)$}

In Definition~\ref{def:central_parallel}, we have introduced some classes of vector fields fixing an infinity of lines, defined from a geometric point of view. We prove that any element of $\D^\infty(\A)$ is essentially of this kind of vector fields.

\begin{thm}\label{thm:infty_vf}
	Let $\chi$ be a polynomial vector field fixing an infinity of lines, then $\chi$ is either null, central or parallel. 
\end{thm}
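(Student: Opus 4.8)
The plan is to reduce the statement to a pigeonhole argument on the (infinite) family $\mathcal{L}$ of invariant lines of $\chi$, using Proposition~\ref{Proposition_CNS} together with the elementary fact that a line $L=\{\alpha_L=0\}$ is invariant for $\chi$ if and only if $\alpha_L\mid\chi\alpha_L$.

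\textbf{Reduction to the reduced field.} If $\chi=0$ we are in the null case. Otherwise I would set $g=\gcd(P,Q)$ and write $\chi=g\,\tilde\chi$ with $\tilde\chi\neq0$ having coprime components. The invariant lines of $\chi$ are exactly those of $\tilde\chi$ together with the at most $\deg g$ lines contained in $\{g=0\}$ (since $\alpha_L$ is irreducible, $\alpha_L\mid\chi\alpha_L=g\cdot\tilde\chi\alpha_L$ forces $\alpha_L\mid g$ or $\alpha_L\mid\tilde\chi\alpha_L$), so $\tilde\chi$ still fixes infinitely many lines; and since collinearity of $(P,Q)$ with $(x-x_0,y-y_0)$, resp.\ with a fixed vector, is preserved upon multiplying by the scalar polynomial $g$, it suffices to show $\tilde\chi$ is central or parallel. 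So from now on one may assume $P,Q$ coprime and $\chi\neq0$; in particular the singular set $\Sigma=\{P=Q=0\}$ is finite.

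\textbf{Key lemma and the dichotomy.} First I would prove: if $L_1\neq L_2$ are non-parallel invariant lines, then $L_1\cap L_2\subset\Sigma$. Writing $L_i=\{a_ix+b_iy+c_i=0\}$, invariance gives $K_i\in\RR[x,y]$ with $a_iP+b_iQ=K_i(a_ix+b_iy+c_i)$; since $a_1b_2-a_2b_1\neq0$ one can solve for $P$ and $Q$ as $\RR[x,y]$-combinations of $K_1(a_1x+b_1y+c_1)$ and $K_2(a_2x+b_2y+c_2)$, both of which vanish at $L_1\cap L_2$, hence so do $P$ and $Q$. Now I would split into two cases. \emph{Case 1:} some $p\in\Sigma$ lies on infinitely many lines of $\mathcal{L}$. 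Translating $p$ to the origin, all but at most one of these lines are $\{y=m_ix\}$ with the $m_i$ pairwise distinct, and case~(2) of Proposition~\ref{Proposition_CNS} gives $Q(x,m_ix)=m_iP(x,m_ix)$ for all $x$, i.e.\ $G:=yP-xQ$ vanishes on each $\{y-m_ix=0\}$; a nonzero polynomial cannot be divisible by infinitely many pairwise coprime linear forms, so $G\equiv0$, meaning $(P,Q)$ is everywhere collinear with $(x,y)$ and $\chi$ is central. \emph{Case 2:} every point of $\Sigma$ meets only finitely many lines of $\mathcal{L}$; as $\Sigma$ is finite, discarding finitely many lines yields an infinite subfamily $\mathcal{L}'$ disjoint from $\Sigma$, and by the key lemma no two lines of $\mathcal{L}'$ are non-parallel, so $\mathcal{L}'$ is an infinite family of pairwise parallel lines. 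After an affine change of coordinates we take these vertical, $\{x=c_i\}$ with infinitely many distinct $c_i$; case~(1) of Proposition~\ref{Proposition_CNS} gives $P(c_i,y)=0$ for all $y$, so for each fixed $y$ the polynomial $x\mapsto P(x,y)$ has infinitely many roots, whence $P\equiv0$ and $\chi=Q\,\partial_y$ is parallel. Both conclusions are legitimate since ``central'' and ``parallel'' are invariant under affine transformations, which also preserve all the hypotheses.

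\textbf{Main obstacle.} The substantive point is organizing the dichotomy: one must pass to the reduced vector field precisely so that $\Sigma$ becomes finite, and this finiteness is exactly what forces one of the two infinite subfamilies (concurrent through a fixed singular point, or mutually parallel) to appear via pigeonhole; the care needed is to confirm that the finitely many exceptional lines — those lying in $\{g=0\}$ or passing through $\Sigma$ — never obstruct the infinitude. The remaining steps (the two invocations of Proposition~\ref{Proposition_CNS} and the divisibility argument for $G$) are routine.
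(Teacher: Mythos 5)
Your proof is correct, and it follows the same broad strategy as the paper (reduce to a vector field with no superfluous common factor, then show the infinite family of invariant lines must be concurrent or parallel), but the implementation differs in three genuine ways. First, you divide out the full $\gcd(P,Q)$, whereas the paper only divides out the linear forms of the lines fixed point by point; your choice is what makes the singular set $\Sigma=\{P=Q=0\}$ finite by B\'ezout, a finiteness the paper's argument also relies on (its case $\#\Sing(\A'_\infty)=\infty$ is excluded by exactly such a count) but obtains less directly. Second, your key lemma --- that two non-parallel invariant lines can only cross at a common zero of $P$ and $Q$ --- is the explicit justification of what the paper asserts ``by construction'', and your dichotomy is a pigeonhole on the finite set $\Sigma$ rather than the paper's Lemma~\ref{Lemma_Singularities} (an infinite family of lines has $0$, $1$ or infinitely many intersection points). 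Third, in the two terminal cases you argue directly ($yP-xQ$ is divisible by infinitely many pairwise non-proportional linear forms, hence vanishes; $P$ vanishes on infinitely many vertical lines, hence $P\equiv 0$), where the paper invokes Proposition~\ref{Propo:SingParaMax}; your computations are precisely the content of that proposition in the limit of infinitely many concurrent or parallel lines, and the self-contained route avoids the slight mismatch that Proposition~\ref{Propo:SingParaMax} is stated for finite arrangements under a degree hypothesis. In short, the paper's route isolates a purely combinatorial lemma and reuses earlier propositions, while yours makes the finiteness of $\Sigma$ and the role of the crossings fully explicit; both are valid, and your reduction by the gcd is arguably the cleaner way to close the infinite-singularities case.
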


The proof is based on the following lemma, about the number of singular points in a collection of a countable infinity of lines.

\begin{lem}\label{Lemma_Singularities}
	Let $\A_\infty=\set{L_1,L_2,L_3,\ldots}$ be an infinite countable collection of distinct lines in the plane, then we have:
	\begin{equation*}
		\# \Sing(\A_\infty)\in\set{0,1,\infty}.
	\end{equation*}
\end{lem}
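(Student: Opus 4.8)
The plan is to argue by contradiction, assuming that $\Sing(\A_\infty)$ is finite and contains at least two points $p$ and $q$; I will derive that then the collection must be finite. First I would observe that the line $\ell$ through $p$ and $q$ need not lie in $\A_\infty$, so one should instead reason about how lines of $\A_\infty$ meet $p$ and $q$. Partition $\A_\infty$ into three sub-families: the lines passing through $p$, the lines passing through $q$, and the lines $L$ passing through neither. Any two lines in this last family must intersect (in $\RR^2$, two lines either meet or are parallel), and that intersection point is a singular point of $\A_\infty$; similarly a line avoiding $p$ and $q$ that meets a line through $p$ produces a singular point distinct from $p$ and $q$. The idea is that a line avoiding $p$ and $q$ is forced to pass through one of the finitely many other allowed singular points in every one of its intersections, which will be too restrictive.

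More precisely, here is the order of steps I would carry out. Step 1: set $\Sing(\A_\infty) = \{p_1, \dots, p_k\}$ with $k \geq 2$ finite, aiming for a contradiction. Step 2: for each point $p_i$, the lines of $\A_\infty$ through $p_i$ form a pencil; a single pencil through a fixed point, together with the requirement that no new singular points appear, would force all of $\A_\infty$ to be a subset of that pencil (giving $\#\Sing = 1$ or $0$), contradiction — so $\A_\infty$ is not contained in a single pencil, hence for at least two distinct points $p_i$ there are lines through them, or there are at least two pairwise non-concurrent lines. Step 3: pick a line $L \in \A_\infty$; every other line $M \in \A_\infty$ either is parallel to $L$ or meets $L$ at one of the $p_i$. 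The lines parallel to $L$ form at most one parallel class; the lines through a fixed $p_i \in L$ form a pencil. So $\A_\infty$ is covered by at most $k+1$ such families (one parallel class plus $k$ pencils centered at the $p_i$). Step 4 (the pigeonhole core): since $\A_\infty$ is infinite and covered by finitely many families, one family is infinite — either infinitely many lines parallel to $L$, or infinitely many lines through some fixed $p_i$. Step 5: take two such infinite families and cross them — two lines from distinct infinite pencils (centered at $p_i \neq p_j$), or an infinite pencil crossed with an infinite parallel class — produce infinitely many pairwise distinct intersection points, all of which are singular points of $\A_\infty$, contradicting finiteness of $\Sing(\A_\infty)$. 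If instead essentially only one infinite family exists and the rest is finite, then removing the finitely many "other" lines shows $\A_\infty$ is a pencil or a parallel class up to finitely many lines, and then the finitely many extra lines contribute only finitely many singular points, so $\#\Sing(\A_\infty)$ is finite but one re-examines: a pencil plus even one transversal line already has that transversal meeting infinitely many pencil lines in infinitely many points — contradiction again unless the "extra" lines all pass through the pencil center, i.e. $\A_\infty$ is exactly a pencil, giving $\#\Sing = 1$ (or $0$ if it is a parallel class).

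I expect the main obstacle to be bookkeeping in Step 3–5: making sure the case split (parallel class versus pencils) is exhaustive and that in every branch where $\#\Sing$ is assumed finite and $\geq 2$ I genuinely extract infinitely many distinct crossing points. The cleanest route is probably: if $\#\Sing(\A_\infty) \geq 2$, fix two singular points, observe $\A_\infty$ cannot be a single pencil, so some line $L_0$ is crossed by infinitely many other lines of $\A_\infty$ (as only one parallel class to $L_0$ exists and there are infinitely many lines), and those infinitely many crossings, lying on the line $L_0$, give infinitely many points of $\Sing(\A_\infty)$ unless they coincide — but they can coincide at only finitely many points of $L_0$, and a point of $L_0$ lying on infinitely many lines of $\A_\infty$ is a legitimate center forcing (after peeling it off) the structure to be a near-pencil; iterating this peeling finitely many times (bounded by $k$) terminates and yields either $\#\Sing \in \{0,1\}$ or the sought contradiction. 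This reduces everything to the elementary incidence fact that a line contains only finitely many points and a finite set of points lies on a line or not, which handles the routine verification.
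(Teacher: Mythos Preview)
Your argument is correct. The core observation --- that if $\Sing(\A_\infty)=\{p_1,\dots,p_k\}$ with $k\ge 2$, then every line of $\A_\infty$ meeting a fixed $L_0\in\A_\infty$ does so at one of the $p_i$, so by pigeonhole some $p_{i_0}\in L_0$ carries an infinite pencil of lines of $\A_\infty$, and then any line of $\A_\infty$ avoiding $p_{i_0}$ (which exists since $k\ge 2$) is hit by that pencil in infinitely many distinct singular points --- is sound and gives the contradiction. One small wrinkle in your ``cleanest route'': the phrase ``only one parallel class to $L_0$ exists'' does not by itself guarantee that $L_0$ is crossed by infinitely many lines, since that class could be infinite; but then any line not in that class (one exists because $\#\Sing\ge 1$) serves as the transversal and is crossed infinitely often, so the argument survives this easy case split.

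The paper proceeds differently: instead of assuming $\Sing(\A_\infty)$ finite and reaching a contradiction, it shows directly that once $\#\Sing(\A_\infty)\ge 2$ the finite truncations $\A_n=\{L_1,\dots,L_n\}$ satisfy $\#\Sing(\A_n)\to\infty$. The inductive step observes that $\#\Sing(\A_{n+1})\ge\#\Sing(\A_n)$, with equality only when $L_{n+1}$ meets $\A_n$ solely in points of $\Sing(\A_n)$; since only finitely many lines are determined by alignments of the finite set $\Sing(\A_n)$, some later $L_{n+k}$ must create a new singular point. Your pigeonhole/structural argument is more direct and arguably tidier once written out carefully; the paper's incremental version is slightly more constructive, exhibiting explicitly how the singular set grows along the given enumeration. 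Both ultimately rest on the same elementary incidence fact that a finite set of points supports only finitely many lines.
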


\begin{proof}
  We decompose the proof by cases:
	\begin{enumerate}
		\item If all the lines of $\A_\infty$ are parallel, then $\# \Sing(\A_\infty)=0$. 
		\item If all the lines of $\A_\infty$ are concurrent, then $\# \Sing(\A_\infty)=1$.
		\item If $\# \Sing(\A_\infty)\geq 2$, we prove by recurrence that:
			\begin{equation*}
				\forall n \geq 2, \exists k\in \NN^*, \text{ s.t. } \# \Sing(\A_k)\geq n,
			\end{equation*}
			where $\A_i=\set{L_1,L_2,\ldots, L_i}$.
			It is obviously true for $n=2$. Assume that it is true for rank $n$. Since $\A_{n}\subset \A_{n+1}$, then $\# \Sing(\A_{n+1}) \geq \# \Sing(\A_n)$, with equality if $L_{n+1}\cap \A_n\subset \Sing(\A_n)$ (in other terms if $L_{n+1}$ only passes through singular points of $\A_n$). Since there is only a finite number of alignment of points of $\Sing(\A_n)$ then there is an integer $k$ such that $L_{n+k}\cap \A_n \nsubseteq \Sing(\A_n)$. We obtain:
			\begin{equation*}
				\# \Sing(\A_{n+k}) > \# \Sing(\A_n).
			\end{equation*}	
\end{enumerate}
\end{proof}

\begin{proof}[Proof of Theorem~\ref{thm:infty_vf}]
	Let $P(x,y)$ and $Q(x,y)$ be such that $\chi=P\partial_x+Q\partial_y$. We define $\A_\infty=\set{L_1,L_2,L_3,\ldots}$ the set (or a subset) of different lines fixed by $\chi$, and we denoted by $\alpha_i$ the equation of $L_i$. In all what follows, we assume that we are not in the first case (i.e. $(P,Q)\neq (0,0)$).
	
	The vector field $\chi$ fixes only a finite number of lines of $\A_\infty$ point by point. Indeed, $L_i$ is fixed point by point by $\chi$ if and only if $\alpha_i \mid P$ and $\alpha_i \mid Q$. Since $P$ and $Q$ are polynomials then they have finite degree, and only a finite number of $\alpha_i$ can divide them. Assume that these lines are $L_1,\ldots, L_k$.
	
	Denote by $\chi'=P'\partial_x+Q'\partial_y$ the derivation of components $P'=P/(\alpha_1\cdots\alpha_k)$ and $Q'=Q/(\alpha_1\cdots\alpha_k)$. It is clear that $\chi$ and $\chi'$ are collinear vector fields. In this way, if $\chi'$ is central (resp. parallel) then $\chi$ is central (resp. parallel). By construction, the set of points fixed by $\chi'$ (i.e. the common zeros of $P'$ and $Q'$) contains the intersection points of $\A'_\infty=\A\setminus\set{L_1,\ldots,L_k}$. By Lemma~\ref{Lemma_Singularities} we have 3 possible cases:
	\begin{enumerate}
		\item $\# \Sing(\A'_\infty)=0$, then all the lines of $\A'_\infty$ are parallel. By Proposition~\ref{Propo:SingParaMax} $\chi'$ is a parallel vector field.
		\item $\# \Sing(\A'_\infty)=1$, then all the lines of $\A'_\infty$ are concurrent. By Proposition~\ref{Propo:SingParaMax} $\chi'$ is a central vector field.
		\item $\# \Sing(\A'_\infty)=\infty$, then the polynomial $P'$ and $Q'$ have an infinity of zero, which is impossible since $P'$ and $Q'$ are not simultaneously null.
	\end{enumerate}
\end{proof}

\subsection{Influence of the combinatorics in $\D^\infty_d(\A)$: a minimal bound}\label{sec:bound}

The dynamical/geometrical characterization of elements in $\D^\infty(\A)$ obtained in Theorem~\ref{thm:infty_vf} allows us to identify and construct them explicitly. Using this, we determine combinatorially the minimal degree from which $\D^\infty(\A)$ is not empty.

\begin{thm}\label{thm:bounds_finite}
	Let $\A$ be a line arrangement and define $\nu_f(\A)=\min\{|\A|-m(\A)+1, |\A|-p(\A)\}$. If $0<d<\nu_f(\A)$, then $\D_d(A)$ and $\D_d^f(A)$ are equal.
\end{thm}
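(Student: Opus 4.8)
The plan is to prove the contrapositive: if $\chi \in \D_d^\infty(\A)$ with $\chi \neq 0$ and $d > 0$, then by Theorem~\ref{thm:infty_vf} the vector field $\chi$ is either central or parallel, and I would show that the existence of such a $\chi$ forces $d \geq \nu_f(\A)$. So the proof splits into two cases according to the dichotomy of Theorem~\ref{thm:infty_vf}.

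First I would handle the central case. Suppose $\chi$ is central with center $(x_0,y_0)$; up to an affine change of coordinates assume the center is the origin, so $(P,Q)$ is everywhere collinear with $(x,y)$, which forces $yP(x,y) = xQ(x,y)$. Writing out this polynomial identity shows $x \mid P$ and $y \mid Q$, and more precisely $P = xH$, $Q = yH$ for a common polynomial $H$ of degree $d-1$. Now $\chi$ fixes a line $L = \ker \ell$ (not through the origin, hence $\ell = ax+by+c$ with $c \neq 0$) if and only if $\ell \mid (aP + bQ) = (ax+by)H$; since $\ell$ does not divide the linear form $ax+by$ (they are non-proportional as $c\neq 0$), we need $\ell \mid H$. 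Thus every line invariant under $\chi$ other than those through the center must be a factor of $H$, and conversely every linear factor of $H$ not through the center is invariant. Since $\A$ is fixed by $\chi$, every line of $\A$ not passing through the origin divides $H$. The center absorbs at most $m(\A)$ of the lines of $\A$ (the maximal multiplicity), so at least $|\A| - m(\A)$ lines of $\A$ must divide $H$; these are pairwise non-proportional, so $\deg H \geq |\A| - m(\A)$, giving $d - 1 \geq |\A| - m(\A)$, i.e. $d \geq |\A| - m(\A) + 1$.

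Next the parallel case. Suppose $\chi$ is parallel to a vector $v$; up to an affine change of coordinates assume $v = (0,1)$, so $P \equiv 0$ and $\chi = Q \partial_y$ with $\deg Q = d$. A line $L = \ker \ell$ with $\ell = ax+by+c$ is invariant under $\chi$: using Proposition~\ref{Proposition_CNS}, a non-vertical line ($a \neq 0$, wlog $a=1$) is invariant iff $bQ(\beta y, \ldots) = 0$ for the appropriate parametrization, which (when $b \neq 0$) forces $Q$ to vanish on that line, i.e. $\ell \mid Q$; a vertical line $x = \text{const}$ is always invariant. So the invariant lines of $\chi$ are exactly the vertical lines together with the linear factors of $Q$. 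Since $\A$ is fixed, every non-vertical line of $\A$ divides $Q$. The vertical direction absorbs at most $p(\A)$ lines of $\A$, so at least $|\A| - p(\A)$ pairwise non-proportional lines divide $Q$, whence $d = \deg Q \geq |\A| - p(\A)$. Combining the two cases, any nonzero $\chi \in \D_d^\infty(\A)$ with $d>0$ satisfies $d \geq \min\{|\A|-m(\A)+1,\ |\A|-p(\A)\} = \nu_f(\A)$; contrapositively, if $0 < d < \nu_f(\A)$ then $\D_d^\infty(\A) = \varnothing$, so $\D_d(\A) = \D_d^f(\A)$.

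The step I expect to be the main obstacle is the bookkeeping of how many lines of $\A$ can be ``absorbed'' by the distinguished locus (the center in the central case, the parallel direction in the parallel case) versus how many are genuinely forced to be factors of $H$ or $Q$ — one must be careful that the lines through the center form a pencil whose size is bounded by $m(\A)$ by definition, and that the parallel lines of $\A$ in a fixed direction number at most $p(\A)$; one also has to make sure that in the central case a line through the center is automatically invariant (so these lines do not impose factoring conditions on $H$) and that the counting of non-proportional factors is tight. The affine-normalization arguments invoking Proposition~\ref{Proposition_CNS} are routine but need to be set up so that the invariance condition cleanly reduces to a divisibility statement; I would state these reductions as short lemmas or inline remarks rather than recompute the pushforward each time.
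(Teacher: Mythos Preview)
Your argument is correct and follows essentially the same route as the paper: the paper packages the two cases as Propositions~\ref{Propo:MinRadial} and~\ref{Propo:MinAffine} (establishing that the minimal degree of a nonzero central, respectively parallel, logarithmic field equals $|\A|-m(\A)+1$, respectively $|\A|-p(\A)$) and then invokes Theorem~\ref{thm:infty_vf} exactly as you do. The only difference is organisational---the paper also records that these bounds are \emph{attained}, yielding the corollary that $\D_d^\infty(\A)\neq\emptyset$ once $d\geq\nu_f(\A)$, whereas your inline argument only extracts the lower bound needed for the theorem.
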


In order to prove this result, we study each case (presented in Proposition~\ref{Propo:MinRadial} and Proposition~\ref{Propo:MinAffine}), to show that if the degree does not satisfied one of the conditions then we are able to construct explicit elements of $\D^\infty(\A)$. This implies that this lower bound is optimal, then we have:

\begin{cor}
		Let $\A$ be a line arrangement, if $d\geq \nu_f$ then $\D^\infty(\A)\neq\emptyset$.
\end{cor}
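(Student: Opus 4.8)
The plan is to establish the corollary as an immediate consequence of Theorem~\ref{thm:bounds_finite} together with the optimality statements announced in Proposition~\ref{Propo:MinRadial} and Proposition~\ref{Propo:MinAffine}. The point is that $\nu_f(\A)$ is realized either by $|\A|-m(\A)+1$ or by $|\A|-p(\A)$, and in each regime one exhibits a concrete element of $\D^\infty(\A)$ of degree exactly $\nu_f(\A)$. So the argument is: take $d\geq\nu_f(\A)$, split according to which of the two quantities attains the minimum, and in each case produce a vector field of the appropriate degree that fixes infinitely many lines; since $\D^\infty(\A)$ is closed under multiplication by $\Q_\A$ (more precisely, multiplying a logarithmic vector field by any product of defining forms of lines of $\A$ keeps it in $\D(\A)$, and keeps it in $\D^\infty$ if it was there), we then also cover all larger degrees.

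First I would treat the radial case $\nu_f(\A)=|\A|-m(\A)+1$. Here $\A$ has a singular point $p$ of multiplicity $m=m(\A)$; after an affine change of coordinates put $p$ at the origin. The ``central'' vector field $\chi_c=x\partial_x+y\partial_y$ fixes every line through the origin, in particular the $m$ lines of $\A$ through $p$, and it fixes infinitely many lines, so $\chi_c\in\D^\infty$ of any arrangement whose every line passes through the origin. To force the remaining $|\A|-m$ lines of $\A$ to be invariant without destroying centrality, multiply $\chi_c$ by the product of the defining forms $\alpha_L$ of those $|\A|-m$ lines: the resulting vector field $\chi=\big(\prod_{L\not\ni p}\alpha_L\big)\,\chi_c$ is collinear with $\chi_c$, hence still central, hence still in $\D^\infty$, and by Proposition~\ref{Proposition_CNS} it is logarithmic for $\A$. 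Its degree is $1+(|\A|-m)=\nu_f(\A)$. For $d>\nu_f(\A)$, multiply further by any line form (say an $\alpha_L$) or by a linear factor; this stays central and logarithmic, giving elements of $\D^\infty_d(\A)$ for every $d\geq\nu_f(\A)$.

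The parallel case $\nu_f(\A)=|\A|-p(\A)$ is entirely analogous with $\chi_c$ replaced by a parallel vector field $\chi_p=v_1\partial_x+v_2\partial_y$ pointing along the common direction of the $p=p(\A)$ parallel lines: $\chi_p$ fixes every line parallel to $v=(v_1,v_2)$, hence infinitely many, and multiplying it by the product of defining forms of the remaining $|\A|-p$ lines yields a parallel (thus infinite-fixing) logarithmic vector field of degree $|\A|-p=\nu_f(\A)$, with larger degrees obtained by multiplying by further line forms. Combining the two cases, for any $d\geq\nu_f(\A)$ we have produced $\chi\in\D^\infty_d(\A)$, so $\D^\infty(\A)\neq\emptyset$; this also shows the bound in Theorem~\ref{thm:bounds_finite} is sharp.

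The only delicate point is the bookkeeping of degrees and the verification that the constructed vector fields genuinely fix an \emph{infinite} family and not merely the finitely many lines of $\A$ — but this is exactly what Theorem~\ref{thm:infty_vf} and the geometric definition of central/parallel vector fields guarantee, since a nonzero central (resp. parallel) field fixes the whole pencil of lines through its center (resp. the whole family of lines in its direction). I expect the main obstacle, if any, to be purely notational: making sure the affine normalizations used to place the singular point at the origin or to make the parallel lines vertical are compatible with the degree count, and that multiplying by $\prod\alpha_L$ does not accidentally lower the degree (it does not, since the $\alpha_L$ are nonconstant affine forms). Everything else follows formally from the results already proven in Sections~\ref{sec:preliminaries} and~\ref{sec:finiteness}.
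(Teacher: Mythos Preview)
Your proposal is correct and follows essentially the same route as the paper: the corollary is stated immediately after Propositions~\ref{Propo:MinRadial} and~\ref{Propo:MinAffine} precisely because those propositions already exhibit, by explicit construction, a central vector field of degree $|\A|-m(\A)+1$ and a parallel vector field of degree $|\A|-p(\A)$ in $\D(\A)$, both of which lie in $\D^\infty(\A)$. Your write-up simply unpacks those constructions and additionally handles all larger degrees by multiplying by further linear forms, which the paper does not bother to spell out (note that, as literally stated, the corollary's conclusion $\D^\infty(\A)\neq\emptyset$ does not depend on $d$, so the degree-raising step is not strictly needed, though it does confirm the sharper reading $\D_d^\infty(\A)\neq\emptyset$ for every $d\geq\nu_f(\A)$).
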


\begin{propo}\label{Propo:MinRadial}
	The minimal degree of a non null central vector field fixing a line arrangement $\A$ is:
	\begin{equation*}
		|\A|-m(\A)+1.
	\end{equation*}
\end{propo}

\begin{proof}
	Let $\chi=P\partial_x+Q\partial_y\in\D_d(\A)$ be a central vector field. A line $L=\ker\alpha_L$ is invariant by $\chi$ if we are in one of the following cases:
	\begin{enumerate}
		\item $L$ passes through the center of the vector field.
		\item $\alpha_L$ divides both $P$ and $Q$.
	\end{enumerate}
	The second condition is the most expensive in terms of degree. To minimize this condition, we maximize the first one. 
	Without loss of generality, we may assume that the origin is a singular point of maximal multiplicity. Consider $\A'\subset\A$ the sub-arrangement composed by lines of $\A$ which does not pass by the origin, we have $P(x,y)=\Q_{\A'}p(x,y)$, and $Q(x,y)=\Q_{\A'}q(x,y)$, with $p$ and $q$ such that $yp-xq=0$. The only polynomials of minimal degree verifying this condition are $p(x,y)=x$ and $q(x,y)=y$. Hence, the result holds.
\end{proof}

\begin{propo}\label{Propo:MinAffine}
	The minimal degree of a non null parallel vector fields fixing a line arrangement $\A$ is:
	\begin{equation*}
		|\A|-p(\A).
	\end{equation*}
\end{propo}

\begin{proof}
	Let $\chi=P\partial_x+Q\partial_y\in\D_d(\A)$ be a parallel vector field. A line $L=\ker\alpha_L$ is invariant by $\chi$ if:
	\begin{enumerate}
		\item $L$ is parallel to $\chi$,
		\item $\alpha_L$ divides both $P$ and $Q$.
	\end{enumerate}
	Once again, the second condition is the most expensive in terms of degree and we maximize the first one in degree. Consider $\A'\subset\A$ the sub-arrangement composed by lines of $\A$ which are not parallel to the vector field, then $\Q_{\A'}$ divides both $P$ and $Q$. Thus, the vector field $\chi'=\Q_{\A'}(\partial_x + \partial_y)$ is a vector field of maximal degree fixing $\A$, collinear to the vector $(1,1)$, and the result holds. 
\end{proof}

\begin{proof}[Proof of Theorem~\ref{thm:bounds_finite}]
	Let $\chi \in \D_d(\A)$ with $0<d<\nu_f(\A)$, then $\chi$ does not satisfy the hypotheses of Proposition~\ref{Propo:MinRadial} and Proposition~\ref{Propo:MinAffine}. Thus $\chi$ is neither central nor parallel. Since $d\neq 0$, Theorem~\ref{thm:infty_vf} implies that $\chi\notin \D_d^\infty(\A)$, and then $\chi\in \D_d^f(\A)$.
\end{proof}

Using Theorem~\ref{thm:bounds_infinite} and Theorem~\ref{thm:bounds_finite}, we obtain the following corollary.

\begin{cor}\label{cor:bounds_D(A)}
	Let $\A$ be a line arrangement. Let $\nu(\A)=\min\{\nu_\infty(\A), \nu_f(\A)\}$, if $0<d<\nu(\A)$ then $\D_d(A)=\emptyset$.
\end{cor}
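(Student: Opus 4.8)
The plan is to combine the two previously-established range restrictions directly. By Theorem~\ref{thm:bounds_infinite}, whenever $d < \nu_\infty(\A)$ we have $\D_d(\A) = \D_d^\infty(\A)$, i.e. every degree-$d$ logarithmic vector field fixes infinitely many lines. By Theorem~\ref{thm:bounds_finite}, whenever $0 < d < \nu_f(\A)$ we have $\D_d(\A) = \D_d^f(\A)$, i.e. every degree-$d$ logarithmic vector field fixes only finitely many lines. So the first step is simply to take $d$ with $0 < d < \nu(\A) = \min\{\nu_\infty(\A),\nu_f(\A)\}$ and observe that both hypotheses are simultaneously in force: $d < \nu_\infty(\A)$ and $0 < d < \nu_f(\A)$.

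Next I would argue by contradiction: suppose $\chi \in \D_d(\A)$ is nonempty for some such $d$. Then on one hand $\chi \in \D_d(\A) = \D_d^\infty(\A) \subset \D^\infty(\A)$, so $\chi$ fixes infinitely many lines; on the other hand $\chi \in \D_d(\A) = \D_d^f(\A) \subset \D^f(\A)$, so $\chi$ admits a maximal fixed arrangement and hence fixes only finitely many lines. Since $\D(\A) = \D^\infty(\A) \sqcup \D^f(\A)$ is a genuine partition (and here the two sets would both have to contain $\chi$), this is a contradiction. Therefore $\D_d(\A) = \emptyset$.

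I do not expect any real obstacle here — the corollary is a formal consequence of the two structure theorems already proved, and the only thing to check is that the inequalities $0 < d < \nu(\A)$ feed correctly into the hypotheses of both Theorem~\ref{thm:bounds_infinite} (which only needs $d < \nu_\infty(\A)$) and Theorem~\ref{thm:bounds_finite} (which needs $0 < d < \nu_f(\A)$). One minor point worth a sentence: the lower bound $d > 0$ is genuinely needed for the Theorem~\ref{thm:bounds_finite} half, since at $d = 0$ a constant vector field is parallel and lies in $\D^\infty(\A)$; this is exactly why the statement excludes $d = 0$. With that noted, the two-line contradiction argument completes the proof.
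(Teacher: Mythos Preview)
Your proposal is correct and is exactly the argument the paper has in mind: the corollary is stated without proof, merely as a consequence of Theorem~\ref{thm:bounds_infinite} and Theorem~\ref{thm:bounds_finite}, and your contradiction via the disjoint partition $\D_d(\A)=\D_d^\infty(\A)\sqcup\D_d^f(\A)$ is the intended (and only natural) way to combine them. Your remark on why $d>0$ is needed is also to the point.
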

\section{Non combinatoriallity of the minimal finite derivations}\label{sec:combinatorially}

Using the results obtained in Section~\ref{sec:finiteness}, we prove explicitly that $d_f(\A)$ is not determined by the number of lines and singular points counted with multiplicities and, as a more strongest result, by the combinatorial information. For that, we consider two explicit counterexamples of line arrangements. As a first pair, we consider the realizations of configurations $(9_3)_1$ and $(9_3)_2$ described in \cite[p. 102]{Hilbert52}, called the \emph{Pappus} and \emph{non-Pappus} arrangements and denoted by $\P_1$ and $\P_2$ respectively (see~\cite{Suciu01}). Both arrangements have the same weak combinatorics (\emph{i.e.} they share the same number of singularities for each multiplicity). The second pair correspond to Ziegler's arrangement $\mathcal{Z}_1$ (see~\cite{Ziegler89}) and $\mathcal{Z}_2$ a small deformation of $\mathcal{Z}_1$, with same strong combinatorics, \emph{i.e.} $L(\mathcal{Z}_1)\simeq L(\mathcal{Z}_2)$.

\begin{rmk}
	These examples are constructed as the affine parts of the projective arrangements previously described, choosing a line of the arrangement as line at infinity.
\end{rmk}

\subsection{Dependency of weak combinatorics}

The result presented here is a weaker restrictive case of Theorem~\ref{thm:CombiMinDegree}, as a first step to explore the relation between the minimal degree of derivations in $\D(\A)$ and the combinatorics of $\A$.

\begin{thm}\label{thm:SingMinDegree}
	The minimal degree $d_f(\A)$ of a finite polynomial vector field fixing $\A$ is not determined by the number of lines and singular points counted with multiplicities of $\A$.
\end{thm}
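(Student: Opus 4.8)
The plan is to prove Theorem~\ref{thm:SingMinDegree} by exhibiting the Pappus pair $\P_1,\P_2$ (the affine parts of the two $(9_3)$ configurations) as an explicit counterexample: both arrangements have $9$ lines and the same multiset of singularities counted by multiplicity, yet I claim $d_f(\P_1)\neq d_f(\P_2)$. The strategy is entirely computational in spirit, but organized around the structure theorem (Theorem~\ref{thm:struc_vf}) and the combinatorial bounds of Section~\ref{sec:finiteness}, so that only a bounded amount of linear algebra needs to be checked.

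First I would fix affine equations $\alpha_L$ for the nine lines of each arrangement (choosing the line at infinity as in the preceding remark), and record $m(\P_i)$ and $p(\P_i)$ — for the Pappus configuration the triple points dominate, so $m=3$ and one checks $p$ is small, giving by Corollary~\ref{cor:bounds_D(A)} a starting degree $\nu(\P_i)$ below which $\D_d$ is empty; crucially this combinatorial lower bound is the \emph{same} for $\P_1$ and $\P_2$ since it depends only on $|\A|$, $m(\A)$, $p(\A)$, which coincide. Next, for each candidate degree $d$ starting from that bound, I would set up the linear map $\psi\colon \C(d)\to\RR^{9(d+1)}$ of Remark after Remark~\ref{Rmk_EquationCNS}, whose kernel is $\F_d\D(\A)$: concretely, for every line $L=\ker(\alpha x+\beta y+\gamma)$ with $\beta\neq 0$ the vanishing of $R(y)=P(\beta y,-\alpha y+\gamma/\beta)+\beta Q(\beta y,-\alpha y+\gamma/\beta)$ gives the equations $(E_m)$, $0\le m\le d+1$, linear in the $a_{i,j},b_{i,j}$. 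Then I would compute the ranks, identify $\F_d\D(\P_i)$, and — using Propositions~\ref{Propo:MinRadial} and~\ref{Propo:MinAffine} to recognize which members are central or parallel (equivalently, which solutions have $yP-xQ=0$ after factoring, or have $P,Q$ proportional to a fixed vector) — read off $\D_d^f(\P_i)=\D_d(\P_i)\setminus\D_d^\infty(\P_i)$. The minimal $d$ with $\D_d^f(\P_i)\neq\emptyset$ is $d_f(\P_i)$.

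The heart of the argument is then the observation that in this first honestly nontrivial degree, the linear system for $\P_1$ (whose nine lines satisfy the Pappus incidence, i.e.\ an extra algebraic dependence among the $\alpha_L$) acquires a solution that is genuinely of finite type — this is exactly where the classical fact that the Pappus configuration is ``special'' among combinatorially-$(9_3)$ configurations enters — whereas for the non-Pappus realization $\P_2$ the same-sized system has strictly smaller kernel, or a kernel consisting only of central/parallel fields, so that $\D_d^f(\P_2)=\emptyset$ at that degree and one must go up by one. Presenting the two explicit kernel generators (a single finite vector field for $\P_1$ at degree $d$, and a verification via rank count that nothing finite exists for $\P_2$ at degree $d$) completes the proof. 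I would also note the symmetry: since the combinatorial lower bound $\nu$ agrees, the discrepancy $d_f(\P_1)\neq d_f(\P_2)$ cannot be detected from weak combinatorics, which is precisely the assertion.

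The main obstacle is the linear-algebra bookkeeping: $\C(d)$ has dimension $(d+1)(d+2)$, growing quadratically, so even writing the matrix of $\psi$ by hand for the relevant $d$ is delicate, and one must be careful that the chosen affine realizations of $\P_1$ and $\P_2$ are ``generic'' within their combinatorial class (no accidental extra alignments creating spurious finite fields, no degenerate choice of line at infinity making a finite point go to infinity and changing $p(\A)$). I would handle this by working with explicit rational coordinates for both configurations, carrying out the rank computation symbolically (or by a short computer-algebra check, which the paper can cite), and double-checking that $m$ and $p$ are indeed equal for the two arrangements so that the only thing distinguishing them is the finite/infinite splitting of $\D_d$ at the critical degree.
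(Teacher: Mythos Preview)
Your overall strategy---exhibit the Pappus pair, compute the linear systems $\F_d\D(\P_i)$ via Theorem~\ref{thm:struc_vf}, and locate the first degree where a finite field appears---is the paper's. Two points need correction.

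First, the affine arrangements $\P_1,\P_2$ have \emph{eight} lines, not nine: the paper takes the projective $(9_3)$ configurations and sends one line of each to infinity (see the remark preceding the theorem and the explicit degree-$8$ defining polynomials). Your map $\psi$ should land in $\RR^{8(d+1)}$.

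Second, and more substantively, you plan to compute $\D_d(\P_i)$ and then filter out central and parallel fields by hand to isolate $\D_d^f(\P_i)$. The paper avoids this step entirely by invoking Theorem~\ref{thm:bounds_finite} rather than Corollary~\ref{cor:bounds_D(A)}: with $|\A|=8$, $m(\A)=3$, $p(\A)=2$ one has $\nu_f(\P_i)=\min\{|\A|-m(\A)+1,\,|\A|-p(\A)\}=6$, so every nonzero element of $\D_d(\P_i)$ for $0<d<6$ is \emph{automatically} finite. The computation therefore reduces to Proposition~\ref{propo:ders_Pappus}: $\D_d(\P_1)=\emptyset$ for $d\le 3$ with $\D_4(\P_1)\neq\emptyset$, while $\D_d(\P_2)=\emptyset$ for $d\le 4$ with $\D_5(\P_2)\neq\emptyset$; no separation of finite from infinite types is needed. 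In particular your expectation that at the critical degree the kernel for $\P_2$ might be nontrivial but ``consist only of central/parallel fields'' cannot happen in this range---by Propositions~\ref{Propo:MinRadial} and~\ref{Propo:MinAffine} the first central or parallel field appears only in degree $6$, and indeed $\D_4(\P_2)$ is simply empty.
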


\begin{figure}[h]
	\includegraphics[height=6cm]{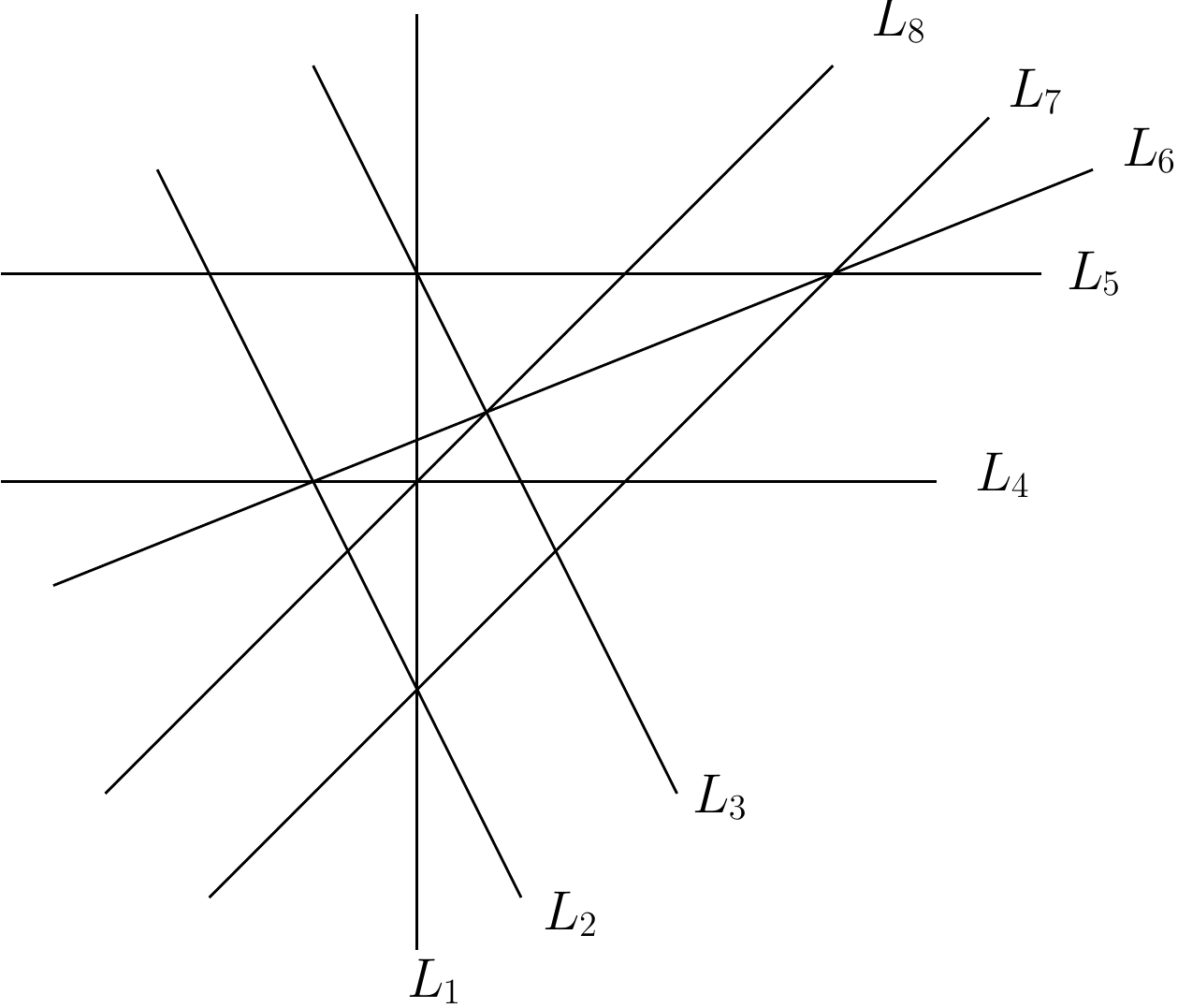}
	\hspace{0.5cm}
	\includegraphics[height=6cm]{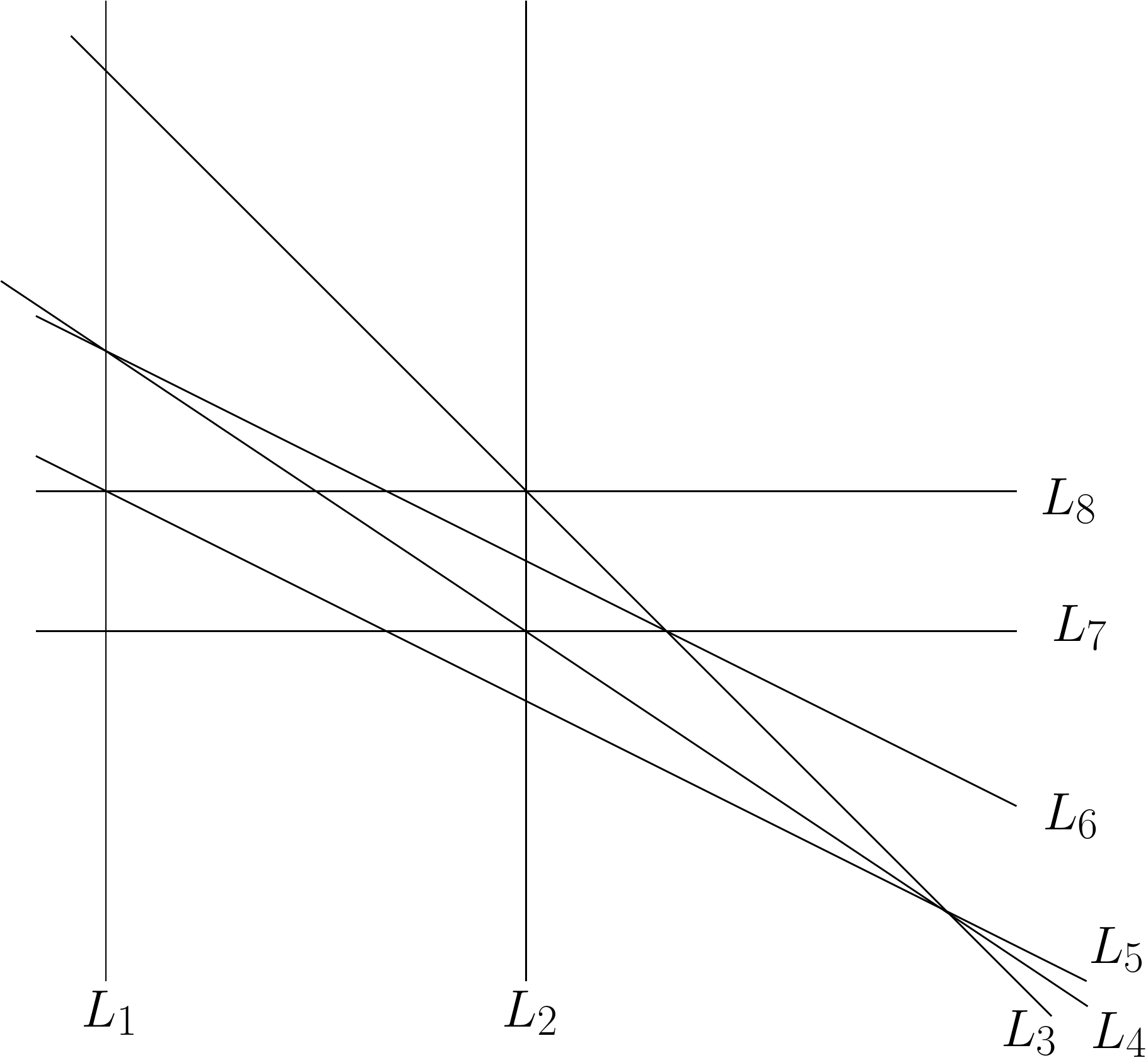}
	\caption{The arrangements $\P_1$ and $\P_2$\label{Fig:Pappus}	}
\end{figure}

In order to prove this theorem, we consider two line arrangements in the plane pictured in Figure~\ref{Fig:Pappus} $\P_1$ (Pappus arrangement) and $\P_2$ (non-Pappus arrangement) defined respectively by:
\begin{equation*}
	\begin{array}{rl}
		\P_1\ :& xy(x-y)(y-1)(x-y-1)(2x+y1)(2x+y-1)(2x-5y+1) \\
		\P_2\ :& xy(x+y)(y+1)(x+3)(x+2y+1)(x+2y+3)(2x+3y+3)
	\end{array}
\end{equation*}
These two arrangements have the same weak combinatorics: 8 lines, 6 triple points and 7 double points. 

\begin{propo}
	The arrangements $\P_1$ and $\P_2$ have not the same combinatorial data, \emph{i.e.} $L(\P_1)\not\simeq L(\P_2)$.
\end{propo}

\begin{proof}
	If we look for lines which posses three triple points and a double point, the only lines in $\P_1$ of this condition are $L_1$ and $L_6$ whose intersection is the common double point, whereas in the line arrangement $\P_2$ we found $L_3$ and $L_4$ with $L_3\cap L_4\cap L_5$ a triple point. 
\end{proof}

Finally, Theorem~\ref{thm:SingMinDegree} holds from the following result.

\begin{propo}\label{propo:ders_Pappus}~
	\begin{enumerate}
		\item For all $i\in\set{1,2,3}$, $\D_i(\P_1)=\emptyset$; and $\D_4(\P_1)\neq\emptyset$,
		\item For all $i\in\set{1,2,3,4}$, $\D_i(\P_2)=\emptyset$; and $\D_5(\P_2)\neq\emptyset$.
	\end{enumerate}
\end{propo}

The proof of this proposition is given in Section~\ref{subsec:proof_propos}.

\begin{proof}[Proof of Theorem~\ref{thm:SingMinDegree}]
    From last proposition, we deduce that $d_f(\P_1)\geq 4$ and $d_f(\P_2)\geq 5$. Using Theorem~\ref{thm:bounds_finite}, we have that $\D_i(\P_1)=\D_i^f(\P_1)$ and $\D_i(\P_2)=\D_i^f(\P_2)$, for any $i\in\set{1,2,3,4,5,6}$. Hence, we obtain that $d_f(\P_1)=4$ and $d_f(\P_2)=5$. This concludes the proof.
\end{proof}

%
%
%

\subsection{Dependency of strong combinatorics}

The main result of this paper is the following:

\begin{thm}\label{thm:CombiMinDegree}
	The minimal degree $d_f(\A)$ of a finite polynomial vectors fields fixing $\A$ is not determined by the combinatorial information of $\A$.
\end{thm}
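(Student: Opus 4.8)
The plan is to follow the template established in the proof of Theorem~\ref{thm:SingMinDegree}, but now using the Ziegler pair $\mathcal{Z}_1$ and $\mathcal{Z}_2$ instead of the Pappus pair. Since $\mathcal{Z}_1$ and $\mathcal{Z}_2$ are constructed to have isomorphic intersection posets, $L(\mathcal{Z}_1)\simeq L(\mathcal{Z}_2)$, it suffices to exhibit explicit defining polynomials for both arrangements (as affine parts of Ziegler's projective configuration and of a small deformation realizing the same combinatorics but a different moduli point) and then show that $d_f(\mathcal{Z}_1)\neq d_f(\mathcal{Z}_2)$. As in the Pappus case, the bridge from ``minimal degree of \emph{any} logarithmic vector field'' to ``minimal degree of a \emph{finite} one'' is Theorem~\ref{thm:bounds_finite}: once we check that the relevant small degrees $d$ satisfy $0<d<\nu_f(\mathcal{Z}_i)$, we get $\D_d(\mathcal{Z}_i)=\D_d^f(\mathcal{Z}_i)$ for free, so computing $d_f$ reduces to computing the first $d$ with $\D_d(\mathcal{Z}_i)\neq\emptyset$.

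The core computational step is an analogue of Proposition~\ref{propo:ders_Pappus}: I would prove that $\D_d(\mathcal{Z}_1)=\emptyset$ for $d$ below some threshold $d_1$ while $\D_{d_1}(\mathcal{Z}_1)\neq\emptyset$, and similarly a different threshold $d_2\neq d_1$ for $\mathcal{Z}_2$. Both computations are finite linear algebra: by Theorem~\ref{thm:struc_vf} and the discussion after Remark~\ref{Rmk_EquationCNS}, $\F_d\D(\mathcal{Z}_i)$ is the kernel of the explicit linear map $\psi:\C(d)\to\RR^{n(d+1)}$ whose rows are the coefficient equations $(E_m)$ coming from Proposition~\ref{Proposition_CNS} applied to each of the $n$ lines. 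So for each arrangement and each small $d$ one writes down this matrix and computes its rank; emptiness of $\D_d$ is the statement that the kernel of $\psi$ at level $d$ coincides with $\F_{d-1}\D$, i.e. contains no genuinely degree-$d$ field. I would present these as the outcome of a (symbolic) rank computation, displaying the explicit equations of $\mathcal{Z}_1$ and $\mathcal{Z}_2$ and deferring the bookkeeping to the same ``proof of propositions'' section used for Pappus.

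Concretely, the steps in order: (1) recall/fix defining polynomials for $\mathcal{Z}_1$ (Ziegler's arrangement) and for the deformation $\mathcal{Z}_2$, and record that they share the same strong combinatorics, $L(\mathcal{Z}_1)\simeq L(\mathcal{Z}_2)$, and hence the same $m(\cdot)$, $p(\cdot)$, $\nu_f(\cdot)$; (2) compute $m(\mathcal{Z}_i)$ and $p(\mathcal{Z}_i)$ to pin down $\nu_f$, and check that the degrees at which the two arrangements differ lie strictly between $0$ and $\nu_f$, so Theorem~\ref{thm:bounds_finite} applies; (3) run the kernel-of-$\psi$ computation to show $\D_d(\mathcal{Z}_1)=\emptyset$ for $d<d_1$ and $\D_{d_1}(\mathcal{Z}_1)\neq\emptyset$, giving $d_f(\mathcal{Z}_1)=d_1$; (4) do the same for $\mathcal{Z}_2$, obtaining $d_f(\mathcal{Z}_2)=d_2$ with $d_2\neq d_1$; (5) conclude that since the two arrangements have the same intersection poset but different $d_f$, the invariant $d_f(\A)$ is not a function of $L(\A)$.

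The main obstacle is step (3)--(4): one must actually find a deformation $\mathcal{Z}_2$ of Ziegler's arrangement that keeps the poset fixed yet changes the minimal degree of a logarithmic vector field — this is exactly the phenomenon (a ``Ziegler pair'') that makes $\D(\A)$ non-combinatorial, and it is delicate because generically a small perturbation will \emph{not} change $d_f$. The subtlety is that $\mathcal{Z}_1$ sits at a special point of the realization space where some extra syzygy (an unexpectedly low-degree logarithmic field, forced by a ``hidden'' collinearity of triple points not recorded in $L(\A)$) exists, and one must verify both that this special field disappears for the generic deformation $\mathcal{Z}_2$ and that no other low-degree field appears to replace it. Beyond that, the risk is purely arithmetical: the rank computations for $\psi$ grow quickly with $d$ and $n$, and one must be careful that the claimed emptiness statements are robust (ideally checked over $\mathbb{Q}$ to avoid floating-point artifacts).
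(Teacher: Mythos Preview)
Your proposal is correct and follows essentially the same route as the paper: exhibit the Ziegler pair $\mathcal{Z}_1,\mathcal{Z}_2$ with $L(\mathcal{Z}_1)\simeq L(\mathcal{Z}_2)$, compute the first nonempty $\D_d$ for each by the linear-algebra/kernel method of Theorem~\ref{thm:struc_vf}, and invoke Theorem~\ref{thm:bounds_finite} to identify $\D_d$ with $\D_d^f$ in the relevant range. One small refinement: with $|\mathcal{Z}_i|=8$, $m=3$, $p=2$ one gets $\nu_f=6$, so Theorem~\ref{thm:bounds_finite} only covers $d\le 5$; the paper therefore does \emph{not} try to pin down $d_f(\mathcal{Z}_2)$ exactly but simply uses $\D_i(\mathcal{Z}_2)=\emptyset$ for $i\le 5$ to conclude $d_f(\mathcal{Z}_2)\ge 6>5=d_f(\mathcal{Z}_1)$, which already suffices---so your step~(4) can be weakened to a lower bound and your step~(2) need only be checked at $d=d_1$.
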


In order to prove this theorem, consider $\mathcal{Z}_1$ be the affine image of Ziegler arrangement~\cite{Ziegler89}, pictured in Figure~\ref{Fig:Ziegler}. This arrangement verifies a very strong geometric condition: the six triple points of the projective image of $\mathcal{Z}_1$ (considering an additional line in the arrangement: the line at infinity) are contained in a conic $\C$. Hence, we construct a line arrangement $\mathcal{Z}_2$ as a small rational perturbation of Ziegler arrangement, displacing the triple point $L_1\cap L_3\cap L_7$ outside of the conic and preserving the combinatorial data. They are both formed by 8 lines with 4 triples points, 14 doubles points and three pairs of parallel lines. Consider the following equations for $\mathcal{Z}_1$ and $\mathcal{Z}_2$:
\begin{equation*}
	\begin{array}{rl}
		\mathcal{Z}_1\ : &  Z(x,y)(9x-2y+3)(11x+2y+1)(5x+5y-2)\\ 
		\mathcal{Z}_2\ : &  Z(x,y)(21x-4y+7)(19x+4y+1)(10x+10y-5) 
	\end{array}
\end{equation*}
where $Z(x,y)=y(2x+2y+1)(3x+y+1)(8x-y+4)(9x+3y-1)$.

\begin{figure}[ht]
	\includegraphics[height=7cm]{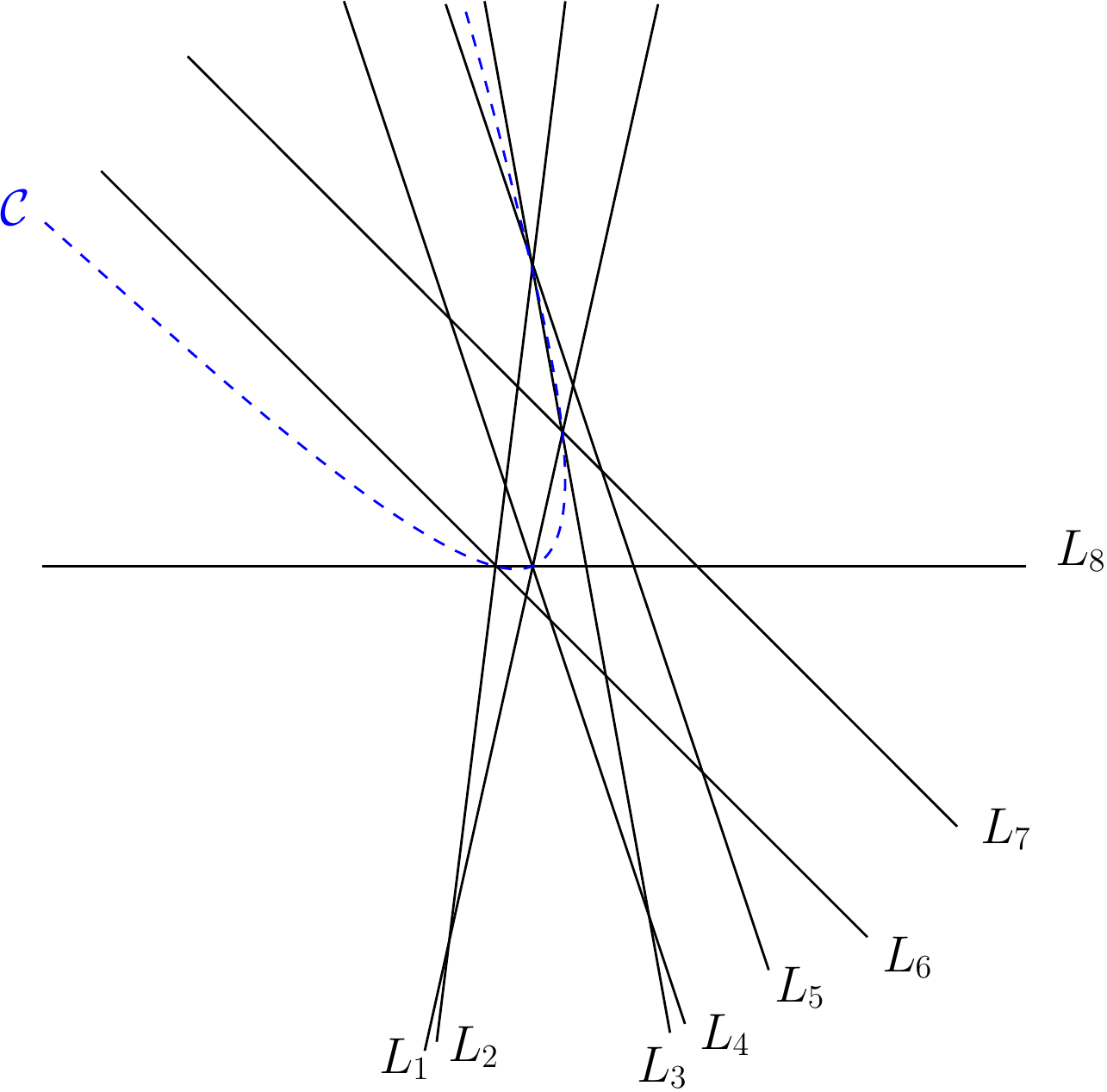}
	\caption{The Ziegler arrangement $\mathcal{Z}_1$ with the conic $\C=\{6x^2 + 2y^2 + 5x + 8xy + 1=0\}$.}\label{Fig:Ziegler}
\end{figure}

\begin{propo}
	The arrangements $\mathcal{Z}_1$ and $\mathcal{Z}_2$ have the same combinatorial information, i.e. $L(\mathcal{Z}_1)\simeq L(\mathcal{Z}_2)$.
\end{propo}

We complete the proof with the following result, discussed in Section~\ref{subsec:proof_propos}. 

\begin{propo}\label{propo:ders_Ziegler}~
	\begin{enumerate}
		\item For all $i\in\set{1,2,3,4}$, $\D_i(\mathcal{Z}_1)=\emptyset$; and $\D_5(\mathcal{Z}_1)\neq\emptyset$,
		\item For all $i\in\set{1,2,3,4,5}$, $\D_i(\mathcal{Z}_2)=\emptyset$; and $\D_6(\mathcal{Z}_2)\neq\emptyset$,
	\end{enumerate}
\end{propo}

\begin{proof}[Proof of Theorem~\ref{thm:CombiMinDegree}]
    As a consequence of last proposition, $d_f(\mathcal{Z}_1)\geq 5$ and $d_f(\mathcal{Z}_2)\geq 6$. But by Theorem~\ref{thm:bounds_finite}, we have that $\D_i(\mathcal{Z}_j)=\D_i^f(\mathcal{Z}_j)$ for any $i\in\set{1,2,3,4,5}$ and $j\in\set{1,2}$. Then $d_f(\mathcal{Z}_1)=5$, which is different of $d_f(\mathcal{Z}_2)$ since it is greater than 6.
\end{proof}

\subsection{Proof of Propositions~\ref{propo:ders_Pappus} and~\ref{propo:ders_Ziegler}}\label{subsec:proof_propos}
    Following Proposition~\ref{Proposition_CNS} and Theorem~\ref{thm:struc_vf}, and using equations (\ref{eq:coeffs_Y}) in Remark~\ref{Rmk_EquationCNS}, the proof of both results is obtained by constructing the matrices $M_{\P_1}, M_{\P_2},M_{\Z_1}$ and $M_{\Z_2}$ for which $\F_4\D(\P_1), \F_5\D(\P_2), \F_5\D(\Z_1)$ and $\F_6\D(\Z_2)$ are the kernels in the respective space of coefficients.
    
    It is easy to see that these matrices have $n(d+1)$ rows and $(d+1)(d+2)$ columns, where $n$ is the number of lines in each line arrangement and $d$ is the degree in the filtration $\F_d$. In order to construct and analyze these matrices, the authors use a set of functions programming over {\tt Sage}~\cite{Sage}, to obtain that $\D_4(\P_1)=\F_4\D(\P_1)\setminus\{0\}$, $\D_5(\P_2)=\F_5\D(\P_2)\setminus\{0\}$, $\D_5(\Z_1)=\F_5\D(\Z_1)\setminus\{0\}$ and $\D_6(\Z_2)=\F_6\D(\Z_2)\setminus\{0\}$. The code source and an appendix with detailed computations can be found in
    \begin{center}
    \url{http://jviusos.perso.univ-pau.fr/pub/combinatorics_vector_fields_appendix.zip}.
    \end{center}

\section{Perspectives}\label{sec:conclu}

The results in this paper can be considered as a first approach concerning the study of the Terao's conjecture about free line arrangements from a dynamical point of view. A line arrangement is called \emph{free} if its corresponding module of derivations is a \emph{free} module. In~\cite{Terao80}, Terao conjectures that freeness of an arrangement is essentially of combinatorial nature: let $\A$, $\A'$ be two line arrangements with same combinatorics (\emph{i.e.} $L(\A)\simeq L(\A')$), if $\A$ is free then $\A'$ is also free and $\D(\A)$, $\D(\A')$ are isomorphic modules. For a precise formulation, we refer to \cite[Chap. 4]{OrlikTerao92}.

The study of $d_f(\A)$, and more generally of $\D_d^f(\A)$, is a first necessary step in this dynamical approach. Furthermore, the Ziegler and non-Ziegler arrangements shows that the set of derivations of a non free arrangement is not determined by the combinatorics and also illustrates the necessity of freeness condition on the arrangement in the Terao's conjecture. The next step will be to dynamically characterize free arrangements.

In~\cite[p.19]{Cartier81}, P. Cartier states that the geometrical interpretation of the freeness condition for a line arrangement is "obscure". His comment relies on the fact that freeness does not seems to be related to any geometrical particularities in the simple case of simplicial line arrangements classified by Gr{\"u}nbaum~\cite{Grunbaum09}. Our previous approach suggest to look for a {\it dynamical} interpretation of freeness. This will be presented in a forthcoming work.

\bigskip
\section*{Acknowledgements}
The authors would like to thank J. Cresson, for the original idea of this paper and all the very helpful discussions and comments. Thanks also to J. Vall{\`e}s for all his explanations about the Terao's conjecture. This work has been developed in the frame of the JSPS-MAE PHC-Sakura 2014 Project. We are very grateful to Professors H. Terao, M. Yoshinaga and T. Abe for their interesting discussions.

\bigskip
\bigskip
\bibliographystyle{alpha}
\bibliography{biblio}

\end{document}